\numberwithin{equation}{section}
\numberwithin{figure}{section}
\spnewtheorem{observation}[theorem]{Observation}{\bfseries}{\rmfamily}
\spnewtheorem{algorithm}[theorem]{Algorithm}{\bfseries}{\rmfamily}
\spnewtheorem{construction}[theorem]{Construction}{\bfseries}{\rmfamily}
\spnewtheorem{bfproblem}[theorem]{Problem}{\bfseries}{\rmfamily}
\def\NAT@spacechar{~}%
\crefname{construction}{Construction}{Constructions}
\crefname{observation}{Observation}{Observations}
\crefname{bfproblem}{Problem}{Problems}
\providecommand*{\shuffle}{%
  \mathbin{\mathpalette\shuffle@{}}%
}
\newcommand*{\shuffle@}[2]{%
  \sbox0{$#1\vcenter{}$}%
  \kern .15\ht0 %
  \rlap{\vrule height .25\ht0 depth 0pt width 2.5\ht0}%
  \raise.1\ht0\hbox to 2.5\ht0{%
    \vrule height 1.75\ht0 depth -.1\ht0 width .17\ht0 %
    \hfill
    \vrule height 1.75\ht0 depth -.1\ht0 width .17\ht0 %
    \hfill
    \vrule height 1.75\ht0 depth -.1\ht0 width .17\ht0 %
  }%
  \kern .15\ht0 %
}
\newcommand{\fp}{fi\-xed-pa\-ra\-me\-ter}
\title{Precedence-constrained scheduling problems parameterized by partial order width}
\author{René van Bevern\inst{1}\fnmsep\thanks{Supported by project~16-31-60007 mol\_a\_dk of the Russian Foundation for Basic Research.}
\and Robert Bredereck\inst{2}
\and Laurent Bulteau\inst{3}
\and Christian~Komusiewicz\inst{4}\fnmsep\thanks{Supported by the DFG, project MAGZ (KO~3669/4-1).}
\and Nimrod Talmon\inst{5}\fnmsep\thanks{Supported by a postdoctoral fellowship from I-CORE ALGO.}
\and Gerhard J.\ Woeginger\inst{6}}
\institute{Novosibirsk State University, Novosibirsk, Russian Federation, \texttt{rvb@nsu.ru}
\and TU Berlin, Germany, \texttt{robert.bredereck@tu-berlin.de}
\and Université Paris-Est Marne-la-Vallée, France, \texttt{l.bulteau@gmail.com}
\and Friedrich-Schiller-Universität Jena, Germany, \texttt{christian.komusiewicz@uni-jena.de}
\and Weizmann Institute of Science,
Rehovot, Israel, \texttt{nimrodtalmon77@gmail.com}
\and TU Eindhoven, The Netherlands, \texttt{gwoegi@win.tue.nl}}
\newcommand{\poly}{\text{poly}}
\newcommand{\interval}[2]{\ensuremath{\{#1,\dots,#2\}}} 
\newcommand{\chain}{\ell}
\newcommand{\ept}{\sigma}
\newcommand{\rtyp}{\ensuremath{\rho}}
\newcommand{\lag}{\ensuremath{\lambda}} %
\newcommand{\pwid}{\ensuremath{w}} %
\newcommand{\Cmax}{\ensuremath{C_{\max}}}
\newcommand{\pTwoPOnetwo}{P2|prec,\(p_j{\in}\{1,2\}\)|\(C_{\max}\)}
\newcommand{\pThreeSizeOnetwo}{P3|prec,\(p_j{=}1\),size\(_j{\in}\{1,2\}\)|\(C_{\max}\)}
\newcommand{\pTwoChains}{P2|chains|\(C_{\max}\)}
\newcommand{\pThreePOne}{P3|prec,\(p_j{=}1\)|\(C_{\max}\)}
\newcommand{\pPOne}{P|prec,\(p_j{=}1\)|\(C_{\max}\)}
\newcommand{\pPrec}{P|prec|\(C_{\max}\)}
\newcommand{\kshuffle}{\textsc{Shuffle Product}}
\newcommand{\domset}{\textsc{Dominating Set}}
\newcommand{\RCPSP}{\textsc{Resource-Constraint Project Scheduling}}
\newcommand{\partition}{\textsc{Partition}}
\begin{document}
\maketitle
\pagestyle{plain}

\begin{abstract}
  Negatively answering a question posed by \citeauthor{MW15} (Math.\ Program.~154(1--2):533--562), we show that \pTwoPOnetwo{}, the problem of finding a non-preemptive minimum-makespan schedule for precedence-constrained jobs of lengths~1 and~2 on two parallel identical machines, is W[2]-hard parameterized by the width of the partial order giving the precedence constraints.  To this end, we show that \kshuffle{}, the problem of deciding whether a given word can be obtained by interleaving the letters of \(k\)~other given words, is W[2]-hard parameterized by~\(k\), thus additionally answering a question posed by \citeauthor{RV13} (CSR~2013).  Finally, refining a geometric algorithm due to \citeauthor{Ser00} (Diskretn.\ Anal.\ Issled.\ Oper.~7(1):75--82), we show that the more general \RCPSP{} problem is fixed-parameter tractable parameterized by the partial order width combined with the maximum allowed difference between the earliest possible and factual starting time of a job.
\end{abstract}

\paragraph{Keywords:} resource-constrained project scheduling, parallel identical machines,  makespan minimization, parameterized complexity, shuffle product

\section{Introduction}
We study the parameterized complexity of the following NP-hard problem and various special cases~\citep{Ull75,SZ15} with respect to the width of the given partial order.
\begin{bfproblem}[Resource-constrained project scheduling (\textsc{RCPSP})]\label{prob:rcpsp}
  \begin{compactdesc}
  \item[\it Input:] A set~\(J\) of jobs, a partial order~\(\preceq\) on~\(J\), a set~\(R\) of renewable resources, for each resource~\(\rtyp\in R\) the available amount~\(R_\rtyp\), and for each~\(j\in J\) a processing time~\(p_j\in\mathbb N\) and the amount~\(r_{j\rtyp}\leq R_\rtyp\) of resource~\(\rtyp\in R\) that it consumes.
  \item[\it Find:] A \emph{schedule}~\((s_j)_{j\in J}\), that is, a \emph{starting time~\(s_j\in\mathbb N\)} of each job~\(j\), such that
    \begin{enumerate}
    \item\label{sched1} for \(i\prec j\), job~\(i\) finishes before job~\(j\) starts, that is, \(s_i+p_i\leq s_j\),
    \item\label{sched2} at any time~\(t\), at most \(R_\rtyp\)~units of each resource~\(\rtyp\) are used, that is, \(\sum_{j\in s(t)}r_{j\rtyp}\leq R_\rtyp\), where \(s(t):=\{j\in J\mid t\in[s_j,s_j+p_j)\}\), and
    \item\label{sched3} the maximum completion time~\(\Cmax:=\max_{j\in J}(s_j+p_j)\) is minimum.
    \end{enumerate}
  \end{compactdesc}
A schedule satisfying \eqref{sched1}--\eqref{sched2} is \emph{feasible}; a schedule satisfying~\eqref{sched1}--\eqref{sched3} is \emph{optimal}.
\end{bfproblem}
Intuitively, a schedule \((s_j)_{j\in J}\) processes each job~\(j\in J\) non-preemptively in the half-open real-valued interval~\([s_j,s_j+p_j)\), which costs \(r_{j\rtyp}\)~units of resource~\(\rtyp\) during that time.  After finishing, jobs free their resources for later jobs.  If there is only one resource and each job~\(j\) requires one unit of it, then RCPSP is equivalent to \pPrec{}, the NP-hard problem of non-preemptively scheduling precedence-constrained jobs on a given number~\(m\) of parallel identical machines to minimize the maximum completion time~\citep{Ull75}.

\looseness=-1 \citet{MW15} asked whether \pPrec{} is solvable in \(f(p_{\max},\pwid)\cdot\poly(n)\)~time, where \(p_{\max}\) is the maximum processing time, \(\pwid\) is the width of the given partial order~\(\preceq\), \(n\)~is the input size, and \(f\)~is a computable function independent of the input size.  In other words, the question is whether \pPrec{} is \emph{fixed-parameter tractable} parameterized by \(p_{\max}\) and \(\pwid\).  Motivated by this question, which we answer negatively, we strengthen hardness results for \pPrec{} and refine algorithms for RCPSP with small partial order width.

\medskip\noindent Due to space constraints, some details are deferred to an Appendix.

\paragraph{Stronger hardness results.}
We obtain new hardness results for the following special cases of~\pPrec{} (for basic definitions of parameterized complexity terminology, see the end of this section and recent textbooks~\citep{CFK+15,DF13}):
\begin{asparaenum}[(1)]
\item\label{hard1} \pTwoChains, the case with two machines and precedence constraints given by a disjoint union of total orders, remains weakly NP-hard for width~3.
\item\label{hard2} \pTwoPOnetwo, the case with two machines and processing times~1 and~2, is W[2]-hard parameterized by the partial order width~\(\pwid\).
\item\label{hard3} \pThreeSizeOnetwo, the case with three machines, unit processing times, but where each job may require one or two machines, is also W[2]-hard parameterized by the partial order width~\(\pwid\).
\end{asparaenum}

Towards showing \eqref{hard2} and \eqref{hard3}, we show that \kshuffle{}, the problem of deciding whether a given word can be obtained by interleaving the letters of \(k\)~other given words, is W[2]-hard parameterized by~\(k\).  This answers a question of \citet{RV13}.  We put these results into context in the following.

Result \eqref{hard1}~complements the fact that \pPrec{} with constant width~\(w\) is solvable in pseudo-polynomial time using dynamic programming~\citep{Ser00} and that \pTwoChains{} is \emph{strongly} NP-hard for unbounded width \citep{DLY91}.

Result \eqref{hard2} complements the NP-hardness
result for \pTwoPOnetwo{} due to \citet{Ull75} and the
W[2]-hardness result for \pPOne{} parameterized by the number~\(m\) 
machines due to~\citet{BF95}.  While not made explicit, one can observe that
\citeauthor{BF95}' reduction creates hard instances with~\(w=m+1\). This is
remarkable since \pPrec{} is trivially polynomial-time solvable
if~\(w\leq m\), and also since the result negatively answered
\citeauthor{MW15}'s question~\cite{MW15} twenty years before it was
posed.  Our result~\eqref{hard2}, however, gives a stronger negative
answer: unless W[2]\({}={}\)FPT, not even \pTwoPOnetwo{} allows for
the desired \(f(w)\cdot\poly(n)\)-time algorithm.

\paragraph{Refined algorithms.} \citet{Ser00} gave a geometric pseudo-polynomial-time algorithm for RCPSP with constant partial order width~\(\pwid\).  The degree of the polynomial depends on~\(\pwid\) and, by \eqref{hard1} above, the algorithm cannot be turned into a true polynomial-time algorithm unless P\({}={}\)NP even for constant~\(w\).
\looseness=-1 We refine this algorithm to solve RCPSP in \((2\lag+1)^\pwid\cdot 2^\pwid\cdot\poly(n)\)~time, where \(\lag\)~is the maximum allowed difference between earliest possible and factual starting time of a job.  The degree of the polynomial depends neither on~\(\pwid\) nor~\(\lag\) and is indeed a polynomial of the input size~\(n\).  This does not contradict \eqref{hard1} since the factor~\((2\lag+1)^\pwid\) might be superpolynomial in~\(n\).
We note that fixed-parameter tractability for \(\pwid\) or \(\lag\) alone is ruled out by~\eqref{hard2} and by \citet{LR78}, respectively.

\paragraph{Preliminaries.}  A reflexive, symmetric, and transitive relation~\(\preceq\) on a set~\(X\) is a \emph{partial order}.  We write \(x\prec y\) if \(x\preceq y\) and \(x\ne y\).  A subset \(X'\subseteq X\) is a \emph{chain} if \(\preceq\) is a total order on~\(X'\); it is an \emph{antichain} if the elements of~\(X'\) are mutually incomparable by~\(\preceq\).  The \emph{width} of~\(\preceq\) is the size of largest antichain in~\(X\).  A \emph{chain decomposition} of~\(X\) is a partition~\(X=X_1\uplus\dots\uplus X_k\) such that each~\(X_i\) is a chain.  %

\looseness=-1 Recently, the \emph{parameterized complexity} of scheduling problems attracted increased interest~\citep{Bev16}.  The idea %
is to accept exponential running times for solving NP-hard problems, but to restrict them to a small \emph{parameter} \citep{DF13,CFK+15}.  Instances~\((x,k)\) of a \emph{parameterized problem}~\(\Pi\subseteq\Sigma^*\times\mathbb N\) consist of an input~$x$ and a parameter~$k$.  A~parameterized problem~$\Pi$ is \emph{fixed-parameter tractable} %
if it is solvable in $f(k) \cdot \poly(|x|)$~time for some computable function~$f$.  Note that %
the degree of the polynomial must not depend on~$k$.  FPT is the class of \fp{} tractable parameterized problems.  There is a hierarchy of parameterized complexity classes FPT\({}\subseteq{}\)W[1]\({}\subseteq{}\)W[2]\({}\subseteq\dots\subseteq{}\)W[P], where all inclusions are conjectured to be strict.  A~parameterized problem~$\Pi_2$ is W[\(t\)]\emph{-hard} if there is a \emph{parameterized reduction} from each problem~\(\Pi_1\in{}\)W[\(t\)] to~\(\Pi_2\), that is, an algorithm that maps an instance~\((x,k)\) of~\(\Pi_1\) to an instance~$(x',k')$ of~\(\Pi_2\) in time~$f(k)\cdot\poly(|x|)$ such that $k'\leq g(k)$ and $(x,k)\in\Pi_1\Leftrightarrow(x',k')\in\Pi_2$, where \(f\)~and~\(g\) are arbitrary computable functions.  No W[\(t\)]-hard problem is fixed-parameter tractable unless FPT\({}={}\)W[\(t\)].

\section{Parallel identical machines and shuffle products}
This section presents our hardness results for special cases of \pPrec{}.
In \cref{sec:weakhardness}, we show weak NP-hardness of \pTwoChains{} for three chains.
In \cref{sec:kshuffle}, we show W[2]-hardness of \kshuffle{} as a stepping stone towards showing W[2]-hardness of \pThreeSizeOnetwo{} and \pTwoPOnetwo{} parameterized by the partial order width in \cref{sec:stronghardness}.

\subsection{Weak NP-hardness for two machines and three chains}
\label{sec:weakhardness}
\citet{DLY91} showed that \pTwoChains{} is strongly NP-hard.  We complement this result by the following theorem.
\begin{theorem}\normalfont\label{thm:weakhardness}
  \looseness=-1\pTwoChains{} is weakly NP-hard even for precedence constraints of width three, that is, consisting of three chains.
\end{theorem}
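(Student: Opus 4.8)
The plan is to reduce from the classical weakly NP-hard \partition{} problem: given positive integers $a_1,\dots,a_n$ with $\sum_{i=1}^n a_i = 2B$, decide whether some subset sums to exactly $B$. I will build an instance of \pTwoChains{} with exactly three chains such that a schedule of a certain target makespan exists if and only if the \partition{} instance is a yes-instance. Two of the three chains will encode the ``weights'' to be partitioned, and the third chain will act as a rigid synchronization spine that forces the scheduler to commit, at each step, to placing one weight-job on machine~1 and leaving the other machine free (or vice versa), thereby splitting the multiset $\{a_1,\dots,a_n\}$ into the two machines.

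Concretely, I would take two parallel chains $P = (p_1 \prec p_2 \prec \dots \prec p_n)$ and $Q = (q_1 \prec q_2 \prec \dots \prec q_n)$ where $p_i$ and $q_i$ both have processing time $a_i$ (possibly scaled and padded with small unit-length ``filler'' jobs so that $p_i$ and $q_i$ cannot run in parallel with each other — this is the delicate modelling point). The third chain $S = (s_1 \prec s_2 \prec \dots)$ consists of blocker jobs of carefully chosen lengths whose precedence relations, together with the target makespan, leave no slack: at each stage the spine occupies one machine for an interval of length $a_i$, so the only way to ``fit'' both $p_i$ and $q_i$ without exceeding the makespan is to run one of them on the other machine concurrently with $s_i$ and defer the other. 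Summing the deferred jobs across all $i$, a feasible schedule of makespan $2B + (\text{overhead})$ exists exactly when the deferred set has total length $B$, i.e.\ exactly when \partition{} is a yes-instance. The reduction is clearly polynomial (all numbers are polynomial in the input, and the construction produces exactly three chains), and \partition{} is only weakly NP-hard, matching the ``weakly'' in the statement; this is consistent with the pseudo-polynomial algorithm of \citet{Ser00} for constant width.

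The main obstacle is the gadget design that forces $p_i$ and $q_i$ to be ``mutually exclusive in time'' using only chain (i.e.\ total-order) precedence constraints and only two machines, while simultaneously keeping the makespan tight enough that no other schedule shape survives. Chains give us sequencing within each of the three lists but no direct cross-chain constraints, so all the coordination has to be bought with processing-time arithmetic and the global makespan bound. I expect to handle this by interleaving the weight-jobs with unit-length synchronization fillers so that the three chains advance ``in lockstep'' — after processing the $i$-th weight, all three chains are forced to be at the same time point — and then arguing by a straightforward exchange/accounting argument that in any schedule meeting the target makespan, exactly one of $p_i,q_i$ runs in the ``$i$-th window'' alongside $s_i$ and the other is pushed to a reserved tail region whose total capacity is exactly $B$. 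Verifying that no clever reshuffling beats this bound — in particular that the blocker chain $S$ truly pins down one machine in each window — is the part of the argument that needs the most care.
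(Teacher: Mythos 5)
There is a genuine gap: your proposal is a plan whose central gadget is left unspecified (you yourself flag the ``delicate modelling point'' and the ``main obstacle''), and the accounting you sketch around it does not work. If both $p_i$ and $q_i$ have length $a_i$ and in every window exactly one of them runs alongside the spine job while the other is deferred, then the deferred set always has total length $\sum_i a_i = 2B$, no matter which of the two is deferred; so the condition ``deferred set has total length $B$'' can never hold, and, worse, since $p_i$ and $q_i$ are interchangeable, the choice of which one to defer carries no information and cannot encode the \partition{} decision at all. The decision that must be encoded is \emph{on which machine} the weight $a_i$ is placed, and your symmetric duplication of the weights into two chains erases exactly that choice. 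In addition, forcing a single spine chain to pin down one machine in prescribed windows using only total-order constraints and a makespan bound is itself nontrivial and is not established.

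The paper's construction is both simpler and avoids these issues: the weights appear in \emph{one} chain $J^0$ of $t$ jobs with processing times $a_1,\dots,a_t$, and the other two chains $J^1,J^2$ each consist of $t+1$ ``long'' jobs of length $2b$, with target makespan $T=(2t+3)b$. Since $(t+2)\cdot 2b > T$, no machine can run $t+2$ long jobs, so each machine runs exactly $t+1$ of them and has exactly $b$ residual capacity for jobs of $J^0$; hence the $J^0$-jobs assigned to each machine sum to exactly $b$, yielding a \partition{} solution, and conversely a solution $A'$ is realized by inserting $j^0_i$ between $j^1_i$ and $j^1_{i+1}$ if $a_i\in A'$ and between $j^2_i$ and $j^2_{i+1}$ otherwise, the staggered insertion windows automatically respecting the precedences inside $J^0$. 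If you want to salvage your approach, the fix is essentially to drop the duplication and let the two blocker chains play the role of $J^1,J^2$, i.e., to let the makespan force a slack of exactly $b$ on each machine rather than trying to synchronize windows job by job.
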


\begin{proof}[sketch]
\looseness=-1 We reduce from the weakly NP-hard \partition{} problem~\cite[SP12]{GJ79}:
Given a multiset of positive integers $A=\{a_1,\ldots,a_t\}$, decide whether there is a subset~$A' \subseteq A$ such that
$\sum_{a_i \in A'} a_i = \sum_{a_i \in A \setminus A'}a_i$.
Let $A=\{a_1,\ldots,a_t\}$ be a \partition{} instance. If $b:=\bigl(\sum_{a_i \in A} a_i\bigr)/2$ is not an integer, then we are facing a no-instance.  Otherwise, we construct a \pTwoChains{}~instance as follows.
Create three chains $J^0:=\{j^0_1\prec\dots\prec j^0_t\}$, $J^1:=\{j^1_1\prec\dots\prec j^1_{t+1}\}$,  and $J^2:=\{j^2_1\prec\dots\prec j^2_{t+1}\}$ of jobs.
For each~$i\in\interval1t$, job~$j^0_i$ gets processing time~$a_i$. The jobs in~\(J^1\cup J^2\) get processing time~$2b$ each.  This construction can be performed in polynomial time and one can show that the input \partition{} instance is a yes-instance of and only if the created \pTwoChains{}~instance allows for a schedule with makespan~\(T:=(2t+3)b\):  in such a schedule, each machine must perform exactly $t+1$~jobs from~$J^1 \cup J^2$ and has \(b\)~time for jobs from~\(J^0\).\qed
\end{proof}

\subsection{W[2]-hardness for Shuffle Product}\label{sec:kshuffle}

\looseness=-1 In this section, we show a W[2]-hardness result for \kshuffle{} that we transfer to %
\pTwoPOnetwo{} and \pThreeSizeOnetwo{}  in \cref{sec:stronghardness}.  We first formally introduce the problem (cf.\ \cref{fig:shuffle}).

\newcommand{\charA}{{\ensuremath{a}}}
\newcommand{\charB}{{\ensuremath{b}}}
\newcommand{\charC}{{\ensuremath{c}}}
\newcommand{\str}{word}
\newcommand{\Str}{Word}
\newcommand{\charCount}[2]{\lvert #1 \rvert_{#2}}
\newcommand{\charCountA}[1]{\charCount{#1}{\charA}}%
\newcommand{\charCountB}[1]{\charCount{#1}{\charB}}%

\begin{definition}[shuffle product]\normalfont\label{def:shuffprod}
  By \(s[i]\), we denote the \(i\)th letter in a word~\(s\).  A \str{}~$t$ is said to be in the \emph{shuffle product} of \str{}s~$s_1$ and~$s_2$, denoted by~$t\in s_1 \shuffle s_2$, if $t$~can be obtained by interleaving the letters of~$s_1$ and~$s_2$.  Formally, $t\in s_1 \shuffle s_2$ if there are increasing functions~$f_1\colon\interval1{|s_1|}\rightarrow\interval1{|t|}$ and~$f_2\colon\interval1{|s_2|}\rightarrow\interval1{|t|}$ mapping positions of~\(s_1\) and~\(s_2\) to positions of~\(t\) such that, for all~$i\in\interval1{|s_1|}$ and~$j\in\interval1{|s_2|}$, one has $t[f_1(i)]=s_1[i]$, $t[f_2(j)]=s_2[j]$, and $f_1(i)\neq f_2(j)$.
  This product is associative and commutative, which implies that the shuffle product of any set of \str{}s is well-defined.  
\begin{figure}[t]\centering
 \begin{tikzpicture}
 \matrix(M)[matrix of nodes,
        nodes={align=center,text width=4mm},
        column 1/.style={nodes={text width=7mm,align=right}}
    ]{
       $s_1=$ & \charA &    &  & \charC &  &  \charB & & &   &\charB \\
       $s_2=$ &  &    & \charB &  & \charB &  & \charC & && \\
       $s_3=$ &  & \charC   &  &  &  &  &              & \charA&\charB\\
       $t=$  & \charA & \charC   & \charB & \charC & \charB & \charB & \charC  & \charA&\charB&\charB\\       
    };
    \foreach \y [count=\i from 2] in {1,3,2,1,2,1,2,3,3,1} {
       \path[dashed,->](M-\y-\i) edge %
       (M-4-\i);
    }
 \end{tikzpicture}
 \vspace{-1em}
 \caption{\label{fig:shuffle}Illustration of a shuffle product: for \(s_1=\charA \charC \charB \charB\), \(s_2=\charB\charB\charC\), and \(s_3=\charC\charA\charB\), one has
  $t=\charA \charC \charB \charC \charB \charB \charC  \charA \charB\charB\in s_1\shuffle s_2\shuffle s_3$. Dashed arcs show how the letters of each $s_i$ map into $t$. }
\end{figure}
\end{definition}

\begin{bfproblem}[(Binary) Shuffle Product]
  \begin{compactdesc}
  \item[\it Input:] \Str{}s~$s_1,\dots,s_k$, and~$t$ over a (binary) alphabet~$\Sigma$.
  \item[\it Parameter:] $k$.
  \item[\it Question:] Is $t\in s_1 \shuffle s_2 \shuffle \dots \shuffle s_k$?
  \end{compactdesc}
\end{bfproblem}
\looseness=-1\textsc{Binary} \kshuffle{} is NP-hard for unbounded~\(k\)~\citep[Lemma~3.2]{WH84}, whereas \kshuffle{} is polynomial-time solvable for constant~$k$ using dynamic programming. \citet{RV13} asked about the parameterized complexity of \kshuffle{}.  We answer the question by the following theorem.

\begin{theorem}\label{thm:shuff}\normalfont
\textsc{Binary} \kshuffle{} is W[2]-hard.
\end{theorem}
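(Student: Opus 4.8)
The plan is to prove W[2]-hardness by a parameterized reduction from \domset{}, which is W[2]-hard when parameterized by the size~$k$ of the sought dominating set. Given a graph~$G$ with vertices~$v_1,\dots,v_n$ and an integer~$k$, I would construct $k$ words $s_1,\dots,s_k$ over the binary alphabet $\{a,b\}$ (possibly together with $O(1)$ auxiliary ``slack'' words, which keeps the parameter at $k+O(1)$, as a parameterized reduction permits) and a target word~$t$ such that $t\in s_1\shuffle\dots\shuffle s_k$ if and only if $G$ admits a dominating set of size~$k$.

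The intended design: each word~$s_i$ is a \emph{slot} whose interleaving position in~$t$ ``commits'' it to one vertex~$v_{p_i}$, and $t$ is a concatenation $B_1B_2\cdots B_n$ of \emph{test blocks}, where block~$B_j$ checks that~$v_j$ is dominated. The encoding should be arranged so that a slot committed to~$v_p$ can deliver a run of $b$'s of controllable positive length into~$B_j$ exactly when $v_j\in N[v_p]$, and only a fixed minimum amount into blocks of non-neighbours; the length of~$B_j$ is then chosen so that~$B_j$ can be completed precisely when at least one slot delivers such a ``covering run'', i.e., precisely when some committed vertex lies in~$N[v_j]$. Long runs of~$a$'s placed between consecutive test blocks serve as \emph{separators}: choosing their lengths larger than the number of slots prevents any letter destined for one block from drifting into another and forces all slots to traverse~$t$ ``in sync''. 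Matching the total length (the sum of the block lengths, plus a fixed $b$-budget per word) pins the construction down so that a feasible interleaving is equivalent to a consistent choice of committed vertices that hits every test block.

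For correctness, in the forward direction a dominating set $\{v_{p_1},\dots,v_{p_k}\}$ is turned into an interleaving in which slot~$i$ commits to~$v_{p_i}$ and, inside each~$B_j$, the required amount of covering material is split among the (nonempty) set of slots committed to neighbours of~$v_j$, with the separator runs absorbing the remaining~$a$'s. Conversely, from any feasible interleaving one reads off the vertex each slot is committed to; since every~$B_j$ gets completed and covering material can reach~$B_j$ only from a slot committed to a neighbour of~$v_j$, every~$v_j$ has a neighbour among the committed vertices, which hence form a dominating set. The construction is clearly computable in polynomial time and uses only the letters~$a$ and~$b$, so it is a valid parameterized reduction.

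The main obstacle is carrying this out over a \emph{binary} alphabet: there are no genuine separators available, so the block/separator layout and the ``commit'' mechanism have to be simulated entirely by long runs of~$a$'s and~$b$'s with a carefully tuned choice of \emph{all} run lengths. The delicate point is to make the ``OR'' inherent in the domination condition (at least one neighbour selected) emerge from an \emph{exact}-tiling constraint while simultaneously fixing every word's letter count despite the varying vertex degrees — the counting has to balance exactly, which is where the auxiliary slack words, if needed, earn their keep.
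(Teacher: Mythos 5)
Your overall route is the same as the paper's: a parameterized reduction from \domset{} in which each of $k$ words ``commits'' to a vertex through its offset in the interleaving, the target word consists of per-vertex test blocks whose lengths implement the domination check, and $O(1)$ slack words balance the letter counts. However, there is a genuine gap at exactly the point you flag as ``delicate'', and your proposed fix for it does not work. You claim that long runs of $a$'s between test blocks, chosen longer than the number of slots, ``force all slots to traverse $t$ in sync'' and prevent drift. Over a binary alphabet this is false: a word may feed fewer letters than intended into an early block and let the slack words (or the other words) absorb the surplus there, catching up only later, so the offset of a given word relative to $t$ can change as one moves along $t$. Nothing in your construction ties the vertex a slot commits to at test block $B_j$ to the vertex it commits to at $B_{j'}$, and without that consistency the backward direction collapses: a single word could ``dominate'' different vertices in different blocks without $G$ having any size-$k$ dominating set.

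The paper does not try to enforce synchronization at all; instead it makes drift harmless. Each word $s_i$ is the adjacency-matrix word $A$ repeated $N=2k(n-1)+1$ times, and $t$ repeats its base pattern $N$ times as well. One then shows that the shift $\delta_i(x)$ of the $x$th block of $s_i$ inside $t$ is nondecreasing and bounded by $2(n-1)$, so each word can change its shift at most $2(n-1)$ times; hence at most $2k(n-1)<N$ of the $N$ periods are ``bad'', and some period is clean for all $k$ words simultaneously. Within that clean period the constant shifts define the dominating set, and the short $\charB$-blocks of length $2k-1$ (versus $2k$), combined with adjacency encoded as $\charB$-runs of length $1$ versus $2$ and $\charA$-blocks of length exactly $k$ in $t$, realize the ``at least one neighbour selected'' OR by a counting argument. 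This repetition-plus-monotone-shift argument is the missing idea in your proposal; without it (or an equivalent mechanism guaranteeing a consistent commitment per word), the reduction as you describe it cannot be completed.
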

Our proof uses a parameterized reduction from the W[2]-hard \domset{} problem~\citep{DF13,CFK+15} and is inspired by \citeauthor{BF95}'s proof that \pPOne{} is W[2]-hard parameterized by the number~\(m\) of machines~\citep{BF95}.
\begin{bfproblem}[Dominating Set]
  \begin{compactdesc}
  \item[\it Input:] A graph~\(G=(V,E)\) and a natural number~\(k\).
  \item[\it Parameter:] $k$.
  \item[\it Question:] Is there a size-$k$ \emph{dominating set}~\(D\), that is, \(V\subseteq N[D]\)?
  \end{compactdesc}
\end{bfproblem}
\noindent Herein, \(N[D]\)~is the set of vertices in~\(D\) and their neighbors. 
In order to describe the construction, we introduce some notation.
\begin{definition}\normalfont
  We denote the concatenation of words~\(s_1,\dots,s_k\) as \(\prod_{i=1}^ks_i:=s_1s_2\dots s_k\) and denote \(k\)~repetitions of a word~\(s\) by~\(s^k\).  The number of occurrences of a letter~$\charA$ in a word~$s$ is $\charCountA{s}$.
\end{definition}
\begin{construction}\label[construction]{constr:ds}
Given a \domset{} instance~\((G,k)\) with a graph~\(G=(V,E)\), we construct an instance of \textsc{Binary} \kshuffle{} with \(k+3\)~\str{}s over~\(\Sigma=\{a,b\}\) in polynomial time as follows.  The construction is illustrated in \cref{fig:reduction}.  Without loss of generality, assume that $V=\interval 1n$. 
\begin{align}
\text{For $u,v\in V$, let
 }\ell_{u,v}&:=
\begin{cases}
  1&\text{if $u=v$ or $\{u,v\}\in E$},\\
  2&\text{otherwise}.\label{defluv}
\end{cases}
\end{align}
Moreover, define two \str{}s
\begin{align*}
  A&:=\prod_{u=1}^n\prod_{v=1}^n \charA \charB^{\ell_{u,v}}&\text{and}&&
  B&:=\bigl((\charA^k \charB^{2k})^{n-1} \charA^k \charB^{2k-1}\bigr)^n.
\end{align*}
Finally, let $N:=2k(n-1)+1$ and output an instance of \kshuffle{} with the following \(k+3\)~\str{}s:
\begin{align*}
  s_i &:=A^N\text{\quad for each }i\in \interval1k,&  t&:= B^N (\charA^k \charB^{2k})^{n-1},\\
  s_{k+1}&:=\charA^{\charCountA{t}-\sum_{i=1}^k\charCountA{s_i}},\text{\quad and}&
  s_{k+2}&:=\charB^{\charCountB{t}-\sum_{i=1}^k\charCountB{s_i}}.
\end{align*}
\end{construction}
Note that \(A\)~is simply the \str{} that one obtains by concatenating the rows of the adjacency matrix of~\(G\) and replacing ones by~\(\charA\charB\) and zeroes by~\(\charA\charB\charB\).   
\begin{figure}[t]\centering
 \begin{tikzpicture}[inner sep=1pt] 
 \node[circle, draw] (a) at (0,0) {$v_1$}; 
 \node[circle, fill=black!20, draw] (b) at (1,0) {$v_2$}; 
 \node[circle, fill=black!20, draw] (c) at (0.5,0.8) {$v_3$}; 
 \draw[line width=1.5pt] (a)--(c);    
 \end{tikzpicture}\hfill
 \begin{tikzpicture} 
 \definecolor{myblue}{rgb}{0,0,0.8}
 \matrix(M)[matrix of nodes,
        nodes={align=center, inner sep=1.5pt,text width=3mm},
        row sep=1.2em,column sep=0em,
        column 1/.style={nodes={text width=7mm,align=right}}
    ]{
       $s_1=$ &&%
& \charA&  $\charB$& \charA&  $\charB\charB$& \charA&  $\charB$%
& \charA&  $\charB\charB$& \charA&  $\charB$& \charA&  $\charB\charB$%
& \charA&  $\charB$& \charA&  $\charB\charB$& \charA&  $\charB$\\ 
       $s_2=$ 
& \charA&  $\charB$& \charA&  $\charB\charB$& \charA&  $\charB$%
& \charA&  $\charB\charB$& \charA&  $\charB$& \charA&  $\charB\charB$%
& \charA&  $\charB$& \charA&  $\charB\charB$& \charA&  $\charB$\\ 
       $t=$ & $\charA^2$&$\charB^4$& $\charA^2$&$\charB^4$& $\charA^2$&$\charB^3$%
&$\charA^2$&$\charB^4$& $\charA^2$&$\charB^4$& $\charA^2$&$\charB^3$%
&$\charA^2$&$\charB^4$& $\charA^2$&$\charB^4$& $\charA^2$&$\charB^3$%
&$\charA^2$&$\charB^4$& $\charA^2$&$\charB^4$\\
    };
    \begin{pgfonlayer}{background}
      \foreach \i in {7,13,19} { \fill[fill=red!30]
        ($(M-1-\i.north west)+(-0.03,0.08)$) rectangle ($(M-3-\i.south east)+(0.03,-0.08)$); }
    \end{pgfonlayer}
    \foreach \i/\k/\j/\x  in {4/9/1/1,10/15/1/2,16/21/1/3,2/7/2/1,8/13/2/2,14/19/2/3} {
       
       \draw[draw=myblue, opacity=0.8] ($(M-\j-\i.north west)+(0.04,0.06)$) rectangle ($(M-\j-\k.south east)+(-0.04,-0.06)$);
        \node[scale=0.9,white,draw=myblue,thick, fill=myblue,inner sep=0.4mm, anchor=south west] at ($(M-\j-\i.north west)+(0.04,0.06)$){ {\boldmath$v_\x$}};
    }
 \end{tikzpicture}
 \caption{\label{fig:reduction}Left: A \domset{} instance with~\(k=2\) and a solution $\{v_2,v_3\}$ (the gray nodes). Right: The ``base pattern'' of the corresponding \kshuffle{} instance (only one repetition of~\(A\) in~\(s_1\) and~\(s_2\) and only one repetition of~\(B\) in~\(t\) is shown). Blocks of~\(s_1\) and~\(s_2\) are mapped into the blocks of~\(t\) displayed in the same column.  The horizontal (blue) rectangles reflect that each $s_i$ is built as the concatenation of the rows of the adjacency matrix, where zeroes are replaced by~$\charA\charB\charB$ and ones by~$\charA\charB$.  The amount of horizontal offset of each~$s_i$ corresponds to the selection of a vertex as dominator ($v_2$~for~$s_1$ and $v_3$ for~$s_2$). The dark columns (red) correspond to the short $\charB$-blocks of~$t$: they ensure that, in each row of the adjacency matrix, at least one selected vertex dominates the vertex corresponding to that row. The base pattern is repeated $N$~times to ensure that at least one occurrence of the pattern is mapped to~\(t\) without unwanted gaps. Additional \str{}s~$s_{k+1}$ and $s_{k+2}$ are added to match the remaining letters from~$t$.}
  \end{figure}
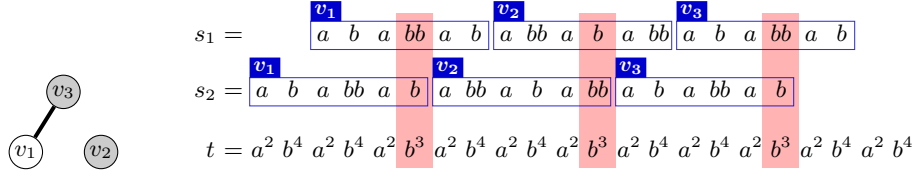

Before showing the correctness of \cref{constr:ds}, we make some basic observations about the \str{}s it creates, for which we introduce some terminology.

\begin{definition}[long and short blocks, positions]\normalfont\looseness=-1 A~\emph{block} in a \str{}~$s$ is a maximal consecutive sub\str{} using only one letter.  A~\emph{\charC{}-block} is a block containing only the letter~\charC{}. A block has \emph{position}~\(i\) in~\(s\) if it is the \(i\)th successive block in~\(s\).
 We call \charB-blocks of length~\(2k-1\) in~\(t\) \emph{short} and \charB-blocks of length~\(2k\) \emph{long}.  
\end{definition}

\begin{observation}\label[observation]{obs:ds}The \str{}s~\(s_1,\dots,s_k\) and~\(t\) created by \cref{constr:ds} from a \domset{} instance~\((G,k)\) have the following properties:
  \begin{enumerate}[(i)]
  \item Each $s_i$ for \(i\in\interval1k\) contains $2Nn^2$ blocks.
  \item\label{tblocks} The word~$t$ contains $2Nn^2+2(n-1)$ blocks.
  \item For \(i\in\interval1k\), all \charA-blocks in~$s_i$ have length~1. All \charA-blocks of~$t$ have length~$k$.
  \item\label{longpos} For $h\in\interval 1{Nn}$, the \charB-blocks at position~$2hn$ in~\(t\) are short. All other \charB-blocks in~\(t\) are long.
  \item\label{coradj} For each \(i\in\interval1k\), $p\in\interval0{N-1}$, and $u,v\in\interval1n$, the \charB-block at position $2pn^2 + 2n(u-1) + 2v$ in~\(s_i\) has length $\ell_{u,v}$: it corresponds to the entry in the \(u\)th row and \(v\)th column of the adjacency matrix of~\(G\).
  \end{enumerate}
\end{observation}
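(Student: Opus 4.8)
The plan is to establish all five properties by directly unfolding the block structure of the words $A$, $B$, $s_i$ and $t$, using repeatedly the elementary fact that a concatenation $xy$ of two words has as many blocks as $x$ and $y$ together whenever the last letter of $x$ differs from the first letter of $y$, so that no two blocks merge across the seam. In each of $A$, $B$, $A^N$, $B^N$ and $t$ the blocks alternate strictly between $\charA$-blocks and $\charB$-blocks and every internal seam has the form $\charB\charA$, so that positions and lengths can be read off directly from the definitions.

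First I would analyse $s_i = A^N$. Each factor $\charA\charB^{\ell_{u,v}}$ of $A$ contributes one $\charA$-block of length $1$, which already gives the first part of~(iii), followed by one nonempty $\charB$-block of length $\ell_{u,v}$; since consecutive factors meet as $\charB\charA$ and each copy of $A$ both starts with $\charA$ and ends with $\charB$, no blocks merge, so $A$ has $2n^2$ blocks and $s_i = A^N$ has $2Nn^2$ blocks, which is~(i). For~(v), the factor $\charA\charB^{\ell_{u,v}}$ is the $\bigl((u-1)n+v\bigr)$-th factor of $A$, so inside one copy of $A$ its $\charB$-block occupies block position $2n(u-1)+2v$; prepending the $p$ copies of $A$ that precede it in $A^N$, each contributing $2n^2$ blocks, shifts this to $2pn^2 + 2n(u-1) + 2v$, and the block has length $\ell_{u,v}$; the stated correspondence with the adjacency matrix of $G$ is immediate from~\eqref{defluv}.

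Next I would analyse $t = B^N(\charA^k\charB^{2k})^{n-1}$ the same way. The ``super-segment'' $(\charA^k\charB^{2k})^{n-1}\charA^k\charB^{2k-1}$ has $n$ blocks $\charA^k$, each of length $k$, which is the second part of~(iii), interleaved with $n$ nonempty $\charB$-blocks, the first $n-1$ long and the last short; no blocks merge across any seam, so $B$, being $n$ copies of the super-segment, has $2n^2$ blocks, $B^N$ has $2Nn^2$ blocks, and appending $(\charA^k\charB^{2k})^{n-1}$, which adds $2(n-1)$ further blocks and again meets $B^N$ as $\charB\charA$, yields the count $2Nn^2 + 2(n-1)$ of~(ii). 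For~(iv), the unique short $\charB$-block of a super-segment is its last block, at block position $2n$ within that super-segment, so in $B$ the short $\charB$-blocks sit at positions $2jn$, $j\in\interval 1n$, and inside the $p$-th copy of $B$ in $B^N$ they move to $2pn^2 + 2jn = 2n(pn+j)$; as $(p,j)$ ranges over $\interval 0{N-1}\times\interval 1n$ the quantity $pn+j$ runs over $\interval 1{Nn}$ bijectively, so the short $\charB$-blocks of $B^N$ are exactly those at positions $2nh$, $h\in\interval 1{Nn}$. All remaining $\charB$-blocks of $B^N$ are long by construction, and the trailing factor $(\charA^k\charB^{2k})^{n-1}$ of $t$ adds only long $\charB$-blocks, at positions beyond $2Nn^2$; hence in $t$ precisely the $\charB$-blocks at positions $2nh$, $h\in\interval 1{Nn}$, are short, which is~(iv).

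The only genuinely delicate point is the index bookkeeping in the last two paragraphs: translating a ``copy index $p$ plus within-copy block position'' into a single global block position, and checking that the trailing factor of $t$ neither introduces a short $\charB$-block nor lands on a position $2nh$. Everything else is routine once the strictly alternating block structure has been observed.
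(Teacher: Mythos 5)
Your verification is correct: the block counts, positions, and lengths all follow exactly as you compute them, and the index bookkeeping (position $2pn^2+2n(u-1)+2v$ for the $(u,v)$-entry, short blocks at $2hn$ with $h=pn+j$) checks out. The paper states this as an observation without proof, precisely because it is meant to be read off from the construction in the direct way you do, so your argument matches the intended (implicit) one.
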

\looseness=-1 Since \cref{constr:ds} runs in polynomial time and the number of \str{}s in the created \kshuffle{} instance only depends on the size of the sought dominating set, for \cref{thm:shuff}, it remains to prove the following lemma.

\begin{lemma}\label{lem:W2Reduction}
Let \(s_1,\dots,s_{k+2}\) and~\(t\) be the \str{}s created by \cref{constr:ds} from a \domset{} instance~\((G,k)\).  Then \(G\)~has a dominating set of size~\(k\) if and only if \(t\in s_1\shuffle s_2\shuffle\dots\shuffle s_{k+2}\).
\end{lemma}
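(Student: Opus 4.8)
The plan is to prove the two implications separately: from a size-$k$ dominating set I will exhibit an explicit shuffle, and from an arbitrary shuffle I will read off a size-$k$ dominating set. It is convenient to view each copy of $A$ inside an $s_i$ (\(i\le k\)) as a sequence of $n^2$ \emph{cells}, the $(u,v)$-th cell being $\charA\charB^{\ell_{u,v}}$, and each copy of $B$ inside $t$ as a sequence of $n^2$ cells, the $(u,v)$-th cell being $\charA^k\charB^{2k}$ for $v<n$ and $\charA^k\charB^{2k-1}$ (a \emph{short} cell) for $v=n$; by \cref{obs:ds} a cell is exactly a maximal $\charA$-block together with the $\charB$-block following it. The underlying idea is that $s_i$ will be aligned inside $t$ with a \emph{cell offset} $\delta_i\in\{0,\dots,n-1\}$ encoding the dominator $d_i:=n-\delta_i$; then the cell of $s_i$ that lands on the short cell in row $u$ of a copy of $B$ is cell $(u,d_i)$ of $A$, and since a short cell holds only $2k-1$ copies of $\charB$, the $s_i$ cannot jointly overfill it, which forces $\sum_{i=1}^k\ell_{u,d_i}\le 2k-1$, i.e., some $d_i$ dominates $u$. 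The words $s_{k+1}$ and $s_{k+2}$ only absorb the $\charA$s and $\charB$s of $t$ not used by $s_1,\dots,s_k$; their lengths in \cref{constr:ds} are exactly these deficits.

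For the forward direction, let $d_1,\dots,d_k$ be vertices (repetitions allowed) with $V\subseteq\bigcup_{i=1}^k N[d_i]$, set $\delta_i:=n-d_i$, and map the $c$-th cell of $s_i$ onto the $(c+\delta_i)$-th cell of $t$; inside a cell, send the $\charA$ of $s_i$ to the $i$-th $\charA$ of the target cell and its trailing $\charB$s to the first still-unused $\charB$s, processing $s_1,\dots,s_k$ in order. Because $\delta_i\le n-1$, every cell of $t$ except possibly its first $n-1$ and last $n-1$ cells receives one letter from each $s_i$; the $\charA$-positions left over are filled by $s_{k+1}$ and the $\charB$-positions left over by $s_{k+2}$, left to right. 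The maps are increasing and their images are disjoint by construction; they cover $t$ precisely because $s_{k+1}$ and $s_{k+2}$ have the deficit lengths, which in turn needs that no $\charB$-block of $t$ is overfilled. For long cells this is clear (each of the $k$ words supplies at most $2$ copies of $\charB$), and for the short cell of row $u$ the cell of $s_i$ landing there is cell $(u,d_i)$ of $A$, so the number of $\charB$s placed there by $s_1,\dots,s_k$ is $\sum_{i=1}^k\ell_{u,d_i}\le 1+2(k-1)=2k-1$ because some $d_j$ dominates $u$.

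For the backward direction, suppose $t\in s_1\shuffle\dots\shuffle s_{k+2}$ via maps $f_1,\dots,f_{k+2}$. The key is a rigidity property: since every $\charA$-block of $s_i$ ($i\le k$) has length $1$ while every $\charA$-block of $t$ has length $k$, two $\charA$s of the same $s_i$ cannot map into the same $\charA$-block of $t$ (the $\charB$ between them would be forced onto an $\charA$-position), so the $\charA$s of $s_i$ hit a strictly increasing sequence of $\charA$-blocks of $t$, and writing $\beta$ for the block hit by the $c$-th such $\charA$, the \emph{displacement} $\beta-c$ is non-decreasing and lies in $\{0,\dots,n-1\}$ (nonnegative, and at most $n-1$ because $t$ has only $n-1$ more $\charA$-blocks than $s_i$ has $\charA$s). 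Hence the displacement of $s_i$ has at most $n-1$ increases, at most $k(n-1)$ over all $i\le k$, so among the $N=2k(n-1)+1>k(n-1)$ disjoint copies of $B$ in $t$ there is one inside which no $s_i$ has an increase: in this copy each $s_i$ sits with a fixed displacement $\delta_i\in\{0,\dots,n-1\}$ and (as displacements stay in $\{0,\dots,n-1\}$) covers all of its short cells. Set $d_i:=n-\delta_i$. The $\charB$s that $s_i$ contributes to the short cell in row $u$ are then exactly the $\ell_{u,d_i}$ letters of cell $(u,d_i)$ of $A$, and $s_{k+1}$ contributes no $\charB$, so $\sum_{i=1}^k\ell_{u,d_i}\le 2k-1$ and thus $\ell_{u,d_i}=1$ for some $i$, i.e., $u\in N[d_i]$. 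Letting $u$ range over the $n$ rows of that copy of $B$ shows that $\{d_1,\dots,d_k\}$ dominates $G$; pad it to size exactly $k$ if necessary.

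I expect the rigidity argument in the backward direction to be the main obstacle: one must argue that the $\charA$s of each $s_i$ can only drift monotonically and by a total amount bounded in $n$, so that a pigeonhole over the $N$ copies of $B$ leaves a ``clean'' copy, and then verify inside such a copy that the $\charB$s $s_i$ delivers to a short cell are exactly those of cell $(u,d_i)$ of $A$ and that neither other $\charB$s of $s_i$ nor any letter of $s_{k+1}$ leaks in. By contrast, the forward direction is mainly bookkeeping about cell offsets and letter counts, the one substantive point being that the dominating-set property is precisely what prevents the short cells from being overfilled.
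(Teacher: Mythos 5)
Your proof is correct and follows essentially the same route as the paper's: encode each chosen dominator as a shift of $s_i$ against $t$, use the capacity $2k-1$ of the short $b$-blocks to force domination, and for the converse note that each word's displacement is non-decreasing and bounded by the block surplus of $t$, then pigeonhole over the $N$ repetitions to find a stretch of constant displacement from which the dominating set is read off. Your bookkeeping in cells (an $a$-block together with the following $b$-block) and tracking only the $a$'s merely repackages the paper's block-position argument (and sidesteps its parity remark), so the two proofs coincide in substance.
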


\begin{proof}
($\Rightarrow$) Assume first that $G=(V,E)$ has a dominating set~$D=\{d_1,\dots,d_k\}$. We describe~$t$ as a shuffle product of the \str{}s~$s_i$ as follows.  For each \(i\in\interval1k\), map all letters from the block at position~$x$ of~$s_i$ into block~$x + 2(n-d_i)$ of~$t$, that is, consecutive blocks of~$s_i$ are mapped into consecutive blocks of~$t$ with a small offset depending on~$d_i$.  So far, at most $k$~letters are mapped into each \charA-block of~\(t\) and at most $2k$~letters are mapped into each \charB-block of~$t$. Hence, all \charA-blocks and all long \charB-blocks of $t$ are long enough to accommodate all their designated letters. It remains to show that at most $2k-1$~letters are mapped into each short \charB-block~\(\beta\) of~$t$.  By \cref{obs:ds}\eqref{longpos}, \(\beta\)~is at position~$2hn$ for some~\(h\in\interval1{Nn}\).  Thus, there are $p\in\interval0{N-1}$ and $u\in\interval 1n$ such that \(2hn=2(pn+u)n=2pn^2+2un\).  For each $s_i$, the block~$\alpha_i$ of~$s_i$ mapped into~$\beta$ has position~$(2pn^2 +2un) - 2(n-d_i)=2pn^2 +2(u-1)n +2d_i$.  Hence, $\alpha_i$~has length~$\ell_{u,d_i}$ by \cref{obs:ds}\eqref{coradj}. Since $D$~is a dominating set, it contains a vertex~\(d_{i^*}\) such that $d_{i^*}=u$ or $\{d_{i^*}, u\}\in E$.  Thus, by \eqref{defluv}, $\alpha_{i^*}$~has length~$\ell_{u,d_{i^*}}=1$.  Overall, at most $k$ \charB-blocks of $\{s_1,\dots,s_k\}$ are mapped into~$\beta$.  We have shown that at least one of them, namely~\(\alpha_{i^*}\), has length one.  Since the others have length at most two, at most $2k-1$~letters are mapped into block~\(\beta\).

\looseness=-1 We have seen a mapping of the \str{}s~\(s_i\) with \(i\in\interval1k\) to~\(t\).  Thus, we have \(\charCountA{t}\geq\sum_{i=1}^k\charCountA{s_i}\) and \(\charCountB{t}\geq\sum_{i=1}^k\charCountB{s_i}\) and the words~\(s_{k+1}\) and~\(s_{k+2}\) are well-defined. It remains to map~$s_{k+1}$ and~$s_{k+2}$ to~\(t\).  Since \(s_{k+1}\)~consists only of \charA{} and \(s_{k+2}\) only of \charB{}, we only have to check that $t$~contains as many letters with letters~$\charA$ or~$\charB$ as all \str{}s~$s_i$ together, which is true by the definition of~\(s_{k+1}\) and~\(s_{k+2}\).
We conclude that $t\in s_1\shuffle s_2\shuffle\dots\shuffle s_{k+2}$ if \(G\)~has a dominating set of size~\(k\).

($\Leftarrow$) Assume that $t\in s_1\shuffle s_2\shuffle\dots\shuffle s_{k+2}$.  We show that \(G\)~has a dominating set of size~\(k\).  To this end, for \(i\in\interval1k\), let $y_i(x)$~be the position of the block in~$t$ into which the last letter of the block at position~\(x\) of~\(s_i\) is mapped and let $\delta_i(x)=y_i(x)-x$.  We will see that, intuitively, one can think of \(\delta_i(x)\) as the shift of the \(x\)th block of~\(s_i\) in~\(t\).  To show that \(G\)~has a dominating set of size~\(k\), we use the following two facts about~\(\delta_i\), which we will prove afterwards.
\begin{enumerate}[(i)]
\item\label{deltaeins} For $i\in\interval1k$ and $x\in\interval1{2Nn^2}$, one has $\delta_i(x)\in\interval0{2(n-1)}$.
\item\label{deltazwei} There is a $p\in\interval0{N-1}$ such that, for all $i\in\interval1k$, $\delta_i$ is constant over the interval~$I_p=\interval{2pn^2+1}{2(p+1)n^2+1}$. 
\end{enumerate}
We now focus on a $p\in\interval0{N-1}$ as in~\eqref{deltazwei} and write~$\delta_i$ for the value~\(\delta_i(x)\) taken for all~\(x\in I_p\).  We show that \(D:=\{d_i=n-\delta_i/2\mid k\in\interval1k\}\) is a dominating set of size~\(k\) for~\(G\), that is, we show \(D\subseteq V\) and \(V\subseteq N[D]\).

To this end, consider a vertex~$u\in V$ and the block~$\beta$ of~$t$ at position~$2pn^2 + 2un=2hn$ for \(h=pn+u\in\interval1{Nn}\).  By \cref{obs:ds}\eqref{longpos}, $\beta$~is a short \charB-block.  For any \(i\in\interval1k\), let $\alpha_i$~be the block at position~$2pn^2 + 2un - \delta_i$ in~$s_i$.  Because of~\eqref{deltaeins}, this position is in~\(I_p\).  By definition of~\(\delta_i\), the last letter of~$\alpha_i$ is mapped into~$\beta$. Thus, $\alpha_i$ is a \charB-block.  Note that this implies that $\delta_i$~is even since \charA-blocks and \charB-blocks are alternating in~\(t\) and~\(s_i\).  Moreover, by~\eqref{deltaeins}, $d_i=n-\delta_i/2\in\interval1n=V$.  It follows that \(D\subseteq V\).  We show that \(u\in N[D]\).  To this end, note that the \charA-block in~\(s_i\) at position \(2pn^2+2un-\delta_i-1\in I_p\) directly preceding~$\alpha_i$ is mapped into the \charA-block of~\(t\) at position~\(2pn^2+2un-1\) directly preceding~$\beta$.  Thus, all letters of~$\alpha_i$ are mapped into~$\beta$ and one has
\begin{align}\label{eq:boundOnSize}
 \sum_{i=1}^k |\alpha_i|&\leq |\beta|.
 \end{align} 
By \cref{obs:ds}\eqref{coradj}, $\alpha_i$~has length $\ell_{u,(n-\delta_i/2)}=\ell_{u,d_i}$.  Since $\beta$~is a short \charB-block, it has length~$2k-1$. From~\eqref{eq:boundOnSize}, we get $\sum_{i=1}^k \ell_{u,d_i} \leq 2k-1$.  Thus, there is some~$i^*\in\interval1k$ with $\ell_{u,d_{i^*}}=1$. By~\eqref{defluv}, that means $d_{i^*}=u$ or \(\{u,d_{i^*}\}\) is an edge in~$G$.  Hence, \(u\in N[D]\) and \(D\)~is a dominating set of size~\(k\) for of~$G$.

\looseness=-1 It remains to prove \eqref{deltaeins} and \eqref{deltazwei}.  For \eqref{deltaeins}, note that $y_i(1)\geq 1$ and $y_i(x+1)\geq y_i(x) +1$.  Hence, $\delta_i$~is non-decreasing with all values being non-negative.  Furthermore, for $x=2Nn^2$, $y_i(x)\leq 2Nn^2+2(n-1)$ since \(t\)~has only so many blocks by \cref{obs:ds}\eqref{tblocks}.  Thus, the maximum possible value of~$\delta_i$ is~$2(n-1)$.  Towards~\eqref{deltazwei}, we say that a value of~$p\in\interval0{N-1}$ is \emph{bad for~$i$} if $\delta_i$~is not constant over~$I_p$. For such a~$p$, one has $\delta_i(2pn^2+1)< \delta_i(2(p+1)n^2+1)$.  Hence, there can be at most $2(n-1)$~values of~$p$ that are bad for~$i$. Overall, there are at most $2k(n-1)<N$~values of~$p$ that are bad for some~$i\in\interval1k$.  Thus, at least one value is not bad for any~$i$.  For this value of~$p$, every~$\delta_i$ is constant over the interval~$I_p$.\qed
\end{proof}

\subsection{W[2]-hardness of scheduling problems parameterized by width} \label{sec:stronghardness}
In the previous section, we showed W[2]-hardness of \kshuffle{}.  We now transfer this result to scheduling problems on parallel identical machines.

\begin{theorem}\label{thm:stronghardness}\normalfont The following two problems are W[2]-hard parameterized by the width of the partial order giving the precedence constraints.

  \begin{inparaenum}[(i)]
  \item\label{hardone} \pTwoPOnetwo{},\qquad
  \item\label{hardtwo} \pThreeSizeOnetwo{}.
  \end{inparaenum}
\end{theorem}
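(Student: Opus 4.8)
The plan is to reduce \textsc{Binary} \kshuffle{}, which is W[2]-hard by \cref{thm:shuff}, to each of the two scheduling problems. Given \str{}s~$s_1,\dots,s_k$ and~$t$ over~$\{\charA,\charB\}$, I first test whether $\charCountA{t}=\sum_{i=1}^{k}\charCountA{s_i}$ and $\charCountB{t}=\sum_{i=1}^{k}\charCountB{s_i}$; if one of these fails then $t\notin s_1\shuffle\dots\shuffle s_k$ and I output a trivial no-instance. Otherwise both equalities hold, so in particular $|t|=\sum_i|s_i|$. I then encode the two letters by two ``shapes'' of jobs: by the machine requirement $\mathrm{size}_j\in\{1,2\}$ for~\pThreeSizeOnetwo{}, and by the processing time $p_j\in\{1,2\}$ for~\pTwoPOnetwo{}. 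In both cases the partial order consists of one chain per~$s_i$ plus a constant number of auxiliary chains, hence has width~$O(k)$, making the reduction parameterized.

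For~\pThreeSizeOnetwo{}, build a \emph{ruler} chain $R=r_1\prec\dots\prec r_{|t|}$ of unit-length jobs with $\mathrm{size}(r_p)=2$ if $t[p]=\charA$ and $\mathrm{size}(r_p)=1$ otherwise, and for each~$i$ a chain $C_i=c_{i,1}\prec\dots\prec c_{i,|s_i|}$ of unit-length jobs with $\mathrm{size}(c_{i,j})=1$ if $s_i[j]=\charA$ and $\mathrm{size}(c_{i,j})=2$ otherwise; there are no precedences between distinct chains, so the width is $k+1$. Ask whether a schedule of makespan~$|t|$ exists. If interleaving functions~$f_i$ witness $t=s_1\shuffle\dots\shuffle s_k$, then running~$r_p$ in time step~$p$ and each~$c_{i,j}$ in time step~$f_i(j)$ is feasible, since step~$p$ then carries~$r_p$ and, as exactly one pair~$(i,j)$ has $f_i(j)=p$, a single chain-job~$c_{i,j}$ of size $3-\mathrm{size}(r_p)$ (because $s_i[j]=t[p]$). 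Conversely, in a makespan-$|t|$ schedule the $|t|$ unit jobs of~$R$ occupy the $|t|$ unit time steps in order, so~$r_p$ runs in step~$p$; the total size demand of all jobs is $3|t|$, so every step uses exactly three machines; hence the chain-jobs of step~$p$ have total size $3-\mathrm{size}(r_p)$, which for $t[p]=\charA$ is a single size-$1$ job. Since each of the $\charCountA{t}$ steps with $t[p]=\charA$ then consumes one size-$1$ chain-job and there are only $\charCountA{t}$ size-$1$ chain-jobs in all, no size-$1$ chain-job is left for a step with $t[p]=\charB$, so each such step carries a single size-$2$ chain-job. Reading the chain-jobs step by step and projecting onto the chains~$C_i$ yields interleaving functions for $t=s_1\shuffle\dots\shuffle s_k$. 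This proves the case~\pThreeSizeOnetwo{}.

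For~\pTwoPOnetwo{} the construction is analogous with letters encoded by processing time, and the target makespan is $T=\charCountA{t}+2\charCountB{t}$; a ruler chain~$R$ whose $p$-th job has processing time~$1$ if $t[p]=\charA$ and~$2$ otherwise has total length~$T$, hence in any makespan-$T$ schedule its jobs occupy $[0,T)$ without gaps, so their boundaries are at fixed positions that cut $[0,T)$ into ``position windows''. The obstacle is that a length-$2$ job blocks two consecutive time units on one machine rather than a single fixed step, so such a job could straddle a window boundary and break the intended correspondence; a short calculation shows this happens precisely when~$R$ runs on the same machine in two consecutive positions. I would handle this by adding a gadget of auxiliary short jobs together with precedence arcs pinning them against~$R$ so as to force~$R$ to alternate between the two machines from one position to the next; then the slack left inside each window lies on the machine opposite to that of its neighbouring windows, the windows become ``isolated'', and no chain-job can straddle. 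This keeps the width~$O(k)$. With isolation in place, the saturation-and-counting argument above carries over verbatim---every makespan-$T$ schedule places exactly one chain-job, of the matching length, into each window, and conversely an interleaving yields such a schedule---which proves the case~\pTwoPOnetwo{}. I expect the design and correctness of this isolation gadget, rather than the counting, to be the main technical work; the counting step mirrors the $(\Leftarrow)$-direction of \cref{lem:W2Reduction}.
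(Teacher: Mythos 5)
Your reduction for \pThreeSizeOnetwo{} (part (ii)) is correct, and it actually takes a more direct route than the paper: you encode the letters of $t$ in the machine requirements of a ruler chain and the letters of each $s_i$ complementarily in chains $C_i$, then use the exact saturation of $3|t|$ machine-units plus the counting of size-1 chain jobs to force one matching chain job per step; this gives width $k+1$ and a self-contained correctness argument. The paper instead proves \pTwoPOnetwo{} in detail via \cref{constr:stronghardness} (worker chains plus ``floor'' jobs) and only sketches (ii) as a variant of that construction, so for this half your argument is a legitimate, arguably cleaner, alternative.

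Part (i) has a genuine gap, and it is exactly the piece the paper's construction is built around. You correctly observe that with a bare ruler a length-2 worker job can straddle two consecutive windows (e.g.\ $t=ba$, $s_1=ab$ admits a makespan-3 schedule although it is a no-instance), but the ``isolation gadget'' you defer is the whole difficulty, and the mechanism you sketch---forcing $R$ to alternate between the two machines---cannot be enforced in this model: on identical machines a schedule is determined by the starting times alone (any family of intervals with at most two running at each time can be realized by \emph{some} machine assignment), so precedence constraints cannot pin a job to a particular machine, and ``slack on the opposite machine'' is not an expressible property. Isolation has to be achieved in time, not by machine parity. Moreover, with your target $T=|t|_a+2|t|_b$ both machines are already fully saturated by ruler and chain jobs, so there is no room to add auxiliary jobs without redesigning the windows and the makespan---which is precisely what the paper does: in \cref{constr:stronghardness}, each position $x$ of $t$ gets a job $z_{x,1}$ of length $t[x]$ followed by \emph{two} unit jobs $z_{x,2},z_{x,3}$ with $z_{x,1}\prec z_{x,2},z_{x,3}\prec z_{x+1,1}$, and the makespan budget is $\sum_x(t[x]+1)$; by \cref{obs:stronghardness} the two unit jobs must run in parallel, blocking both machines for one time unit after every window, which rules out straddling regardless of the machine assignment while the floor still decomposes into two chains (width $k+2$). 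Without this barrier gadget (or an equivalent one), your part (i) is not established.
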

\noindent We prove \eqref{hardone} using the following parameterized reduction from \kshuffle{} with \(k+1\)~words to \pTwoPOnetwo{} with \(k+2\)~chains.
\begin{construction}\label{constr:stronghardness}
Let $(s_1, \ldots, s_k, t)$~be a \kshuffle{} instance over the alphabet~\(\Sigma=\{1,2\}\). Assume that $\charCount t1=\sum_{i=1}^k\charCount {s_i}1$ and $\charCount t2=\sum_{i=1}^k\charCount {s_i}2$ (otherwise, it is a no-instance).  We create an instance of \pTwoPOnetwo{}:
  \begin{asparaenum}[(1)]
  \item For each $i\in \interval1k$, create a chain of \emph{worker} jobs~\(j_{i1}\prec j_{i2}\prec\dots\prec j_{i|s_i|}\), where $j_{i,x}$~has  length~$s_i[x]$.%
  \item \looseness=-1 For each $x\in\interval1{|t|}$, create three \emph{floor} jobs
 $z_{x,1}$, $z_{x,2}$, $z_{x,3}$ with $z_{x,1}\prec z_{x,2}$ and $z_{x,1}\prec z_{x,3}$, where $z_{x,1}$ has length~$t[x]$, and $z_{x,2}$ and $z_{x,3}$~have length~1.  If $x<|t|$, then also add the precedence constraints~$z_{x,2}\prec z_{x+1,1}$ and $z_{x,3}\prec z_{x+1,1}$.
  \end{asparaenum}
\end{construction}

\noindent %
Observe that $\{z_{x,1},z_{x,2} \mid 1\leq x\leq |t|\}$ is chain.  Thus, the makespan of any schedule is at least $T:=\sum_{x=1}^{|t|}(t[x]+1)$. For $x\in\interval1n$, let $\tau(x):=\sum_{i=1}^{x-1}(t[x]+1)$. 
\begin{observation}\label{obs:stronghardness}
A schedule with makespan~$T$ must schedule job~$z_{x,1}$ at time~$\tau(x)$, and jobs~$z_{x,2}$ and~$z_{x,3}$ at time~$\tau(x)+t[x]$. Thus, 
 for $x\in\interval1{|t|}$, both machines are used by floor jobs from $\tau(x)+t[x]$ to $\tau(x)+t[x]+1$ and one machine is free of floor jobs between~$\tau(x)$ and~\(\tau(x)+t[x]\) for \(t[x]\)~time units.  We call these \emph{available time slots}.
\end{observation}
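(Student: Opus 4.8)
\textbf{Proof plan for \cref{obs:stronghardness}.} The plan is to exploit the single long chain of floor jobs identified immediately before the observation, namely
\[
  C:\quad z_{1,1}\prec z_{1,2}\prec z_{2,1}\prec z_{2,2}\prec\dots\prec z_{|t|,1}\prec z_{|t|,2},
\]
whose jobs have total processing time $\sum_{x=1}^{|t|}(t[x]+1)=T$. Since the jobs of a chain are pairwise comparable, they can never overlap and must be processed sequentially; a schedule of makespan exactly~$T$ therefore leaves~$C$ no slack whatsoever. This rigidity will fix the start times of all $z_{x,1}$ and $z_{x,2}$, after which the remaining jobs $z_{x,3}$ are forced into place by their precedence constraints.

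First I would record the elementary fact that for any chain $a_1\prec\dots\prec a_m$, feasibility condition~\eqref{sched1} gives $s_{a_{r+1}}\geq s_{a_r}+p_{a_r}$, whence the completion time satisfies $s_{a_m}+p_{a_m}\geq s_{a_1}+\sum_{r=1}^m p_{a_r}$. Applied to~$C$, and using $s_{a_1}\geq 0$, this lower-bounds the completion time of~$C$ by $0+T=T$. Since $\Cmax=T$, the whole chain of inequalities must be tight: $s_{z_{1,1}}=0$ and every job of~$C$ starts exactly when its predecessor finishes. Reading off the cumulative positions along~$C$, and recalling that $\tau(x)=\sum_{i=1}^{x-1}(t[i]+1)$ is precisely the total length of the prefix $z_{1,1},z_{1,2},\dots,z_{x-1,1},z_{x-1,2}$, yields $s_{z_{x,1}}=\tau(x)$ and $s_{z_{x,2}}=\tau(x)+t[x]$, as claimed.

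The step I expect to be the main obstacle is locating~$z_{x,3}$, since it does not lie on the forcing chain~$C$ and so is not pinned by the back-to-back argument; it must be squeezed into place separately. The constraint $z_{x,1}\prec z_{x,3}$ gives $s_{z_{x,3}}\geq\tau(x)+t[x]$. For $x<|t|$ the constraint $z_{x,3}\prec z_{x+1,1}$ together with the already-established $s_{z_{x+1,1}}=\tau(x+1)=\tau(x)+t[x]+1$ gives $s_{z_{x,3}}+1\leq\tau(x)+t[x]+1$. For the boundary index $x=|t|$ there is no successor job, so here I would instead use the makespan bound $s_{z_{|t|,3}}+1\leq T=\tau(|t|)+t[|t|]+1$ as a substitute. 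In either case $s_{z_{x,3}}=\tau(x)+t[x]=s_{z_{x,2}}$.

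Finally, the ``Thus'' part follows by reading off which floor jobs overlap each interval, taking care that processing intervals are half-open. In $[\tau(x)+t[x],\tau(x)+t[x]+1)$ exactly $z_{x,2}$ and $z_{x,3}$ are running, so both machines are occupied by floor jobs. In $[\tau(x),\tau(x)+t[x])$ the only floor job running is $z_{x,1}$: the jobs $z_{x-1,2},z_{x-1,3}$ finish at $\tau(x)$, the jobs $z_{x,2},z_{x,3}$ start only at $\tau(x)+t[x]$, and all other floor jobs occupy pairwise disjoint time ranges. Hence the second machine is free of floor jobs throughout this length-$t[x]$ slot, which completes the proof.
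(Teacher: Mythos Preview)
Your proof is correct and matches the paper's intended argument. The paper actually leaves this as a bare observation without a written-out proof; the only justification it gives is the sentence immediately preceding the observation, namely that \(\{z_{x,1},z_{x,2}\mid 1\le x\le |t|\}\) is a chain of total length~\(T\), which is exactly the forcing chain you exploit.
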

\looseness=-1\noindent \cref{constr:stronghardness} runs in polynomial time.  Moreover, from \(k+1\)~input words, it creates instances of width~\(k+2\): there are \(k\)~chains of worker jobs and the floor decomposes into two chains $\{z_{x,1},z_{x,2} \mid 1\leq x\leq |t|\}$ and $\{z_{x,1},z_{x,3} \mid 1\leq x\leq |t|\}$.  To prove \cref{thm:stronghardness}\eqref{hardone}, one can thus show that \(t\in s_1\shuffle\dots\shuffle s_{k}\) if and only if the created \pTwoPOnetwo{} instance allows for a schedule of makespan~\(T\). By \cref{obs:stronghardness}, any such schedule has available time slots of lengths corresponding to the letters in~\(t\), each of which can accommodate a worker job corresponding to a letter of~\(s_1,\dots,s_k\).  The precedence constraints ensure that these worker jobs get placed into the time slots corresponding to letters of~\(t\) in increasing order.

The proof of \cref{thm:stronghardness}\eqref{hardtwo} works analogously: one simply replaces worker jobs of length two by worker jobs of length one that require two machines and modifies the floor jobs so that they do not create time slots of length one or two, but so that each created time slot is available on only one or on two machines.  To achieve this, the construction uses three machines.

\section{Resource-Constrained Project Scheduling}
In \cref{sec:stronghardness}, we have seen that \pThreeSizeOnetwo{} is W[2]-hard parameterized by the partial order width.  It follows that also RCPSP  (cf.\ \cref{prob:rcpsp}) is W[2]-hard for this parameter, even if the number of resources and the maximal resource usage are bounded by two and all jobs have unit processing times.
In this section, we additionally consider the \emph{lag} parameter:
\begin{definition}[earliest possible starting time, lag]\normalfont\label{def:ept}
  Let \(J_0\subseteq J\)~be the jobs that are minimal elements in the partial order~\(\preceq\).
  The \emph{earliest possible starting time~\(\ept_j\)} is 0 for a job~\(j\in J_0\) and, inductively, \(\max_{i\prec j}(\ept_i+p_i)\) for a job~\(j\in J\setminus J_0\).  %
  The \emph{lag} of a feasible schedule~\((s_j)_{j\in J}\) is \(\lag:=\max_{j\in J} s_j-\ept_j\).
\end{definition}
\noindent \citeauthor{LR78}'s NP-hardness proof for \pPOne{}~\citep{LR78} shows that it is even NP-hard to decide whether there is a schedule of makespan at most three and lag at most one.  Thus, the lag~\(\lag\) alone cannot lead to a fixed-parameter algorithm for RCPSP, just as the width~\(\pwid\) alone cannot.  We show a fixed-parameter algorithm for the parameter~\(\lag+w\).

\begin{theorem}\normalfont\label{thm:rcpsp}
  An optimal schedule with lag at most~\(\lag\) for RCPSP is computable in \((2\lag+1)^\pwid\cdot 2^\pwid\cdot\poly(n)\)~time if it exists, where \(w\)~is the partial order width.
\end{theorem}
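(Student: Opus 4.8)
The plan is to exploit the bounded lag together with a chain decomposition to perform a dynamic program over ``time-sliced'' configurations of bounded size. Fix a chain decomposition $J = X_1 \uplus \dots \uplus X_w$ into $w = \pwid$ chains, computable in polynomial time by Dilworth's theorem (e.g.\ via bipartite matching). For each chain $X_c$ enumerate its jobs in precedence order as $j_{c,1} \prec j_{c,2} \prec \dots$; within a chain the order of execution in any feasible schedule is forced. The key observation is that, because the lag is at most $\lag$, once we know the ``progress'' along each chain at a given moment, the starting time $s_j$ of the next job $j$ on each chain is confined to an interval of length $2\lag+1$ around its earliest possible starting time $\ept_j$ (which is itself determined once we know which predecessors have completed). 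So a state of the dynamic program records, for each chain $c$, the index $\mu_c$ of the job on chain $c$ currently being processed or most recently completed, together with its start time encoded as an offset in $\interval{0}{2\lag}$ relative to a reference time; this gives the $(2\lag+1)^\pwid$ factor. The additional $2^\pwid$ factor comes from, for each chain, one bit recording whether the currently-indexed job is still running or already finished at the ``frontier'' time.

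First I would make precise the notion of a frontier. Process events (job completions) in nondecreasing order of time. At any point in the sweep, let the \emph{frontier} be the tuple $(\mu_1,\dots,\mu_w)$ of per-chain indices of the last job started, each tagged running/finished, plus the set of finish times of the still-running jobs. The claim to prove is that finish times of running jobs, and hence the next start times, differ from $\ept$-values by at most $\lag$; combined with the fact that $\ept_j$ is computable from the frontier (it depends only on completion times of predecessors, all of which lie on the $w$ chains and are recorded), this bounds the number of reachable frontier states by $(2\lag+1)^\pwid \cdot 2^\pwid \cdot \poly(n)$ — the $\poly(n)$ absorbing the reference time and the $\mu_c \le n$ themselves. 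Transitions: from a frontier, advance time to the earliest finish time among running jobs; the corresponding chain's index tag flips to finished; then greedily/branchingly decide, for each chain whose current job is finished, whether to start the next job now (it must respect resource constraint~\eqref{sched2}, checkable from the set $s(t)$ which the frontier encodes, and precedence~\eqref{sched1}, also encoded) — but since start time is pinned to within $2\lag+1$ choices this branching is already accounted for in the state count. The DP value is the minimum makespan; correctness follows because any optimal schedule of lag $\le \lag$ induces a unique sequence of frontiers and each transition is legal, and conversely any DP path yields a feasible schedule.

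Two technical points need care. One is encoding the reference time so the offsets stay bounded: I would let the reference be the current sweep time $t$ itself and store each running job's remaining processing as $p_j$ minus elapsed, but more cleanly store, for each chain, $s_{j_{c,\mu_c}} - \ept_{j_{c,\mu_c}} \in \interval{0}{\lag}$ — wait, this alone gives only $(\lag+1)^\pwid$; the extra factor to $2\lag+1$ and the separate $2^\pwid$ arises because we must also track, relative to the event-sweep time, whether each chain is ``ahead'' or ``behind'', i.e.\ the sign and magnitude of (finish time of current job) minus (sweep time), which ranges over an interval of length $2\lag+1$ once the lag bound is invoked on both the job and its successor. I would state a lemma bounding $|s_j - \ept_j| \le \lag$ and $|\ept_j - \ept_i| $ for $i$ the predecessor of $j$ on its chain, then conclude all relevant quantities live in windows of the advertised sizes. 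The second point: verifying feasibility of a transition requires knowing $s(t)$, the set of jobs running at time $t$; but $s(t)$ consists of at most one job per chain, namely $j_{c,\mu_c}$ when its tag is ``running,'' so it is read directly off the frontier, and the resource check $\sum_{j \in s(t)} r_{j\rtyp} \le R_\rtyp$ is $\poly(n)$ per transition.

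The main obstacle I expect is proving that the frontier genuinely captures everything needed — specifically, that $s(t)$ and all future $\ept$-values depend only on the recorded per-chain indices, tags, and offsets, and not on finer scheduling history. This requires the argument that in any lag-$\lag$ schedule the per-chain execution order is forced (immediate, since a chain is a total order) and that a predecessor of job $j$ either lies on $j$'s own chain (recorded) or on another chain where it must already be finished before $j$ starts (so its completion time is $\ept$-relevant and is itself bounded via the lag of \emph{that} job) — one has to check there is no circular dependency, which follows by doing the $\ept$ computation in topological order as in \cref{def:ept}. Once that structural lemma is in place the running-time bound and correctness are bookkeeping. This refines \citeauthor{Ser00}'s geometric algorithm in that the $\lag$ bound replaces the pseudo-polynomial time-axis discretization by a genuinely bounded window, moving the polynomial's degree off of $\pwid$.
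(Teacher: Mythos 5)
The central gap is the state count, which is the actual content of the theorem. You record per chain an index \(\mu_c\), a running/finished tag, and an offset, and then assert that the number of reachable frontiers is \((2\lag+1)^\pwid\cdot 2^\pwid\cdot\poly(n)\) ``with the \(\poly(n)\) absorbing \dots the \(\mu_c\le n\) themselves.'' That cannot stand as written: the \(\pwid\) indices are independent and each ranges up to \(n\), so the index tuples alone number up to \(n^\pwid\), and a polynomial factor whose degree is independent of \(\pwid\) cannot absorb them --- this is precisely the obstacle that keeps \cref{alg:servakh} pseudo-polynomial. What is missing is a synchronization lemma anchoring \emph{all} chains to a common reference time: in any schedule of lag at most \(\lag\), at a fixed time \(t\) the current index on chain \(c\) can take at most \(\lag+1\) values, because \(\ept\) increases by at least one per job along a chain while \(s_j\in[\ept_j,\ept_j+\lag]\) for every job. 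Only such a statement (the analogue of \cref{lem:corridor} together with the hypercube counting of \cref{lem:bigcubes,lem:boxes}) collapses the joint state space to \((O(\lag))^\pwid\cdot 2^\pwid\cdot\poly(n)\); the per-job fact you do state (finish times differ from \(\ept_j+p_j\) by at most \(\lag\)) does not by itself bound the number of joint frontiers. Worse, your concrete derivation of the \(2\lag+1\) window is false: ``(finish time of the current job) minus (sweep time)'' is not confined to an interval of length \(2\lag+1\) --- it can be as large as \(p_j\) when a long job is running --- and arbitrarily large processing times are exactly the regime this theorem targets (the paper states a separate, weaker \(\poly(L)\)-time proposition for the case where windows over the time axis suffice). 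The fix is to store only \(s_j-\ept_j\in\interval0\lag\) and reconstruct absolute start and finish times from the precomputed \(\ept_j\), but then the \((2\lag+1)^\pwid\) bound has to come from the synchronization argument, not from the quantity you propose to track.

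A second, smaller gap: your sweep starts jobs only at completion events, so the claimed converse (``any optimal schedule of lag \(\le\lag\) induces a sequence of frontiers and each transition is legal'') fails for schedules in which a job starts strictly between two completions. You need either a left-shift/semi-active normalization (left shifts preserve feasibility and never increase lag or makespan, and a non-shiftable schedule is event-aligned), or you must admit the \(O(\lag)\) candidate start times per job directly --- which is what the paper does by letting segments begin and end anywhere in the union \(\Gamma\) of hypercubes anchored at \(\ept_j+\lag\) and \(\ept_j+p_j+\lag\). Your structural claim that the frontier suffices for feasibility checks is fine (a cross-chain predecessor with smaller index on its chain completed before that chain's current job started, hence before the sweep time), and your overall plan --- a dynamic program anchored to the earliest-start schedule with per-chain windows of size \(O(\lag)\) --- is essentially the paper's geometric corridor algorithm in scheduling language; but as written, the two quantitative steps that make the theorem true are asserted rather than proved, and one of them rests on a bound that is simply not valid.
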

Our algorithm is a refinement of \citeauthor{Ser00}'s pseudo-polynomial-time algorithm for RCPSP{} with constant width~\citep{Ser00}, which is based on graphical optimization methods introduced by \citet{Ake56} and \citet{HN63} for hand-optimizing \textsc{Job Shop} schedules for two jobs.  We provide a concise translation of \citeauthor{Ser00}'s algorithm in \cref{sec:pseudofpt} before we prove \cref{thm:rcpsp} in \cref{sec:realfpt}.

\subsection{Geometric interpretation of RCPSP}\label{sec:pseudofpt}
\noindent Given an RCPSP instance with precedence constraints~\(\preceq\) of width~\(\pwid\), by Dilworth's theorem, we can decompose our set~\(J\) of jobs into \(\pwid\)~pairwise disjoint chains. More specifically, these chains are efficiently computable~\citep{FRS03}.  For~\(\chain\in\{1,\dots,\pwid\}\), denote the jobs in chain~\(\chain\) by a sequence~\((j_{\chain k})_{k=1}^{n_\chain}\) such that \(j_{\chain k}\prec j_{\chain k+1}\) and let
\begin{itemize}[ \(L_\chain^i:=\)]
\item[\(L_\chain^i:=\)]\(\sum_{k=1}^{i}p_{j_{\chain k}}\) be the sum of processing times of the first \(i\)~jobs on chain~\(\chain\),
\item[\(L_\chain:=\)]\(L_\chain^{n_\chain}\) be the sum of processing times of all jobs on chain~\(\chain\).
\end{itemize}
Let~\(\vec 0:=(0,\dots,0)\in\mathbb R^\pwid\) and \(\vec L:=(L_1,\dots,L_\pwid)\).  Each point in the \(\pwid\)-dimensional orthotope~\(X:=\{\vec x\in\mathbb R^{\pwid}\mid \vec0\leq \vec x \leq\vec L\}\) describes a \emph{state} as follows.
\begin{definition}[running, completed, feasibility]\normalfont\label{def:state}
  Let \(\vec x=(x_1,\allowbreak\dots,\allowbreak x_\pwid)\in X\).  For each chain~\(\chain\in\interval1w\), if \(x_\chain\in[L_{\chain}^{i-1},L_{\chain}^{i})\), then the jobs~\((j_{\chain k})_{k=1}^{i-1}\) of chain~\(\chain\) are \emph{completed} and job~\(j_{\chain i}\) has been \emph{processed for \(x_\chain - L_{\chain}^{i-1}\)~time}.  We call job~\(j_{\chain i}\) \emph{running} if~\(L_\chain^{i-1}<x_\chain<L_\chain^i\).  We denote by
  \begin{itemize}[ \(C(\vec x)\subseteq J\)]
  \item[\(J(\vec x)\subseteq J\)] the set of jobs running in state~\(\vec x\) and by
  \item[\(C(\vec x)\subseteq J\)] the set of jobs completed in state~\(\vec x\).
  \end{itemize}
  A point~\(\vec x\in X\) is \emph{feasible} if it holds that both
  \begin{enumerate}[({IF}1)]
  \item\label{infeas1} the jobs~\(J(\vec x)\) comply with resource constraints, that is, \(\sum_{j\in J(\vec x)}r_{j\rtyp}\leq R_\rtyp\) for each resource~\(\rtyp\in R\), and
  \item\label{infeas2} if there are two jobs~\(i\prec j\) such that~\(j\in J(\vec x)\), then \(i\in C(\vec x)\).
  \end{enumerate}
\end{definition}
\looseness=-1Note that %
points~\(\vec x\in X\) may indeed violate~(IF\ref{infeas2}): there are not only precedence constraints between jobs on one chain, but also between jobs on different chains.

Each feasible schedule now yields a path of feasible points in the orthotope~\(X\) from the point~\(\vec 0\), where no job has started, to the point~\(\vec L\), where all jobs are completed.  Each such path consists of (linear) segments of the form~\([\vec x,\vec x+t\vec\delta]\) for some~\(\vec\delta=(\delta_1,\dots,\delta_\pwid)\in\{0,1\}^\pwid\), which corresponds to running exactly the jobs on the chains~\(\chain\) with~\(\delta_\chain=1\) for \(t\)~units of time.  Since all processing times and starting times are integers (cf.\ \cref{prob:rcpsp}), we can assume~\(t\in\mathbb N\).
\begin{definition}[feasibility of segments and their lengths]\normalfont
  The \emph{length} of a segment~\([\vec x,\vec x+t\vec\delta]\) is~\(t\).  The \emph{length} of a path is the sum of the lengths of its segments.  A segment~\([\vec x,\vec x+t\vec\delta]\) is \emph{feasible} if it contains only feasible points and interrupts no jobs; that is, if there is a job~\(j\in J(\vec x)\) on chain~\(\chain\), then \(\delta_\chain=1\).
\end{definition}
There is now a one-to-one correspondence between feasible schedules and paths from~\(\vec 0\) to~\(\vec L\) consisting only of feasible segments and between the shortest of these paths and optimal schedules.  This leads to the following algorithm.
\begin{algorithm}[\citet{Ser00}]\label{alg:servakh}
Compute a shortest feasible path from~\(\vec 0\) to~\(\vec L\) using dynamic programming:  for each feasible point~\(\vec x\in X\cap\mathbb N^\pwid\) in lexicographically increasing order, compute the length~\(P(\vec x)\) of a shortest feasible path from~\(\vec 0\) to~\(\vec x\) using the recurrence relation
\begin{align}
  P(\vec 0)&=0, &P(\vec x)=\min_{\vec\delta\in\Delta_{\vec x}}P(\vec x-\vec \delta)+1\text{\quad for feasible }\vec x\in X\cap\mathbb N^\pwid\setminus\{\vec 0\},\label{eq:recur}
\end{align}
  where \(\Delta_{\vec x}\) is the set of vectors~\(\vec \delta\in\{0,1\}^\pwid\) such that segment~\([\vec x-\vec\delta,\vec x]\) is feasible.
\end{algorithm}
To compute~\(P(\vec L)\), one thus iterates over at most 
\(\prod_{\chain=1}^\pwid (L_\chain+1)\)
points~\(\vec x\in X\cap\mathbb N^\pwid\), for each of them over \(2^\pwid\)~vectors \(\vec\delta\in\{0,1\}^\pwid\), and, for each, decides whether~\([\vec x-\vec\delta,\vec x]\)~is feasible.  Since the set of running jobs is the same for all interior points of the segment, it is enough to check the feasibility of its end points and one interior point, which can be done in polynomial time.  Thus, the algorithm runs in
\(
  \prod_{\chain=1}^\pwid (L_\chain+1)\cdot2^\pwid\cdot\poly(n)~\text{time},
\)
which is pseudo-polynomial for constant~\(\pwid\).
\subsection{Fixed-parameter algorithm for arbitrary processing times}\label{sec:realfpt}
\looseness=-1 The bottleneck of \cref{alg:servakh} is that it searches for a shortest path from~\(\vec 0\) to~\(\vec L\) in the whole orthotope~\(X\).  For the case where we are only accepting schedules of maximum lag~\(\lag\), we will shrink the search space significantly: we show that we only have to search for paths within a tight corridor around the path corresponding to the schedule~\((\sigma_j)_{j\in J}\) that starts jobs at the earliest possible time.

\begin{definition}[point at time \boldmath\(t\) on a path]\normalfont
  Let \(p\)~be the path from~\(\vec 0\) to~\(\vec L\) corresponding to a not necessarily feasible schedule \((s_j)_{j\in J}\) that, however, respects precedence constraints.  Let  \(t\geq 0\) and \(T\)~be the length of~\(p\).

  Then, 
\(\vec p(t)\) is the endpoint of the subpath of length~\(t\) of~\(p\) starting in~\(\vec 0\) for \(t\leq T\), and \(\vec p(t):=\vec L\) for \(t>T\).
\end{definition}
Since the definition requires \((s_j)_{j\in J}\)~to respect precedence constraints, \(\vec p(t)\)~determines the state (cf.\ \cref{def:state}) at time~\(t\) according to schedule~\((s_j)_{j\in J}\).

\begin{definition}[\boldmath\(\lag\)-corridored]\normalfont\label{def:corridor}
Let \(p\)~be the path corresponding to the schedule~\((\sigma_j)_{j\in J}\) that starts jobs at the earliest possible time (cf.\ \cref{def:ept}).  
 \[\Gamma_\lag(t):=\{\vec x\in X\mid \vec p(t)-\vec \lag\leq\vec x\leq\vec p(t)\},\text{\quad where } \vec\lag=(\lag,\dots,\lag)\in\mathbb N^\pwid.   \]
 We call a path \(q\) \emph{\(\lag\)-corridored} if \(\vec q(t)\in\Gamma_\lag(t)\) for all~\(t\geq 0\).
\end{definition}

\noindent Note that points on the path~\(p\) in \cref{def:corridor} may violate \cref{def:state}(IF\ref{infeas1}), but not (IF\ref{infeas2}).  One can show the following relation between \(\lag\)-corridored paths and schedules of lag~\(\lag\).

\begin{lemma}\normalfont\label{lem:corridor}
  A feasible schedule~\((s_j)_{j\in J}\) has lag at most~\(\lag\) if and only if its corresponding path~\(q\) is \(\lag\)-corridored.
\end{lemma}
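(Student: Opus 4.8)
The plan is to prove both implications by relating, for each chain~$\chain$ and each time~$t$, the $\chain$-th coordinate $q_\chain(t)$ of the path~$q$ of the schedule $(s_j)_{j\in J}$ to the $\chain$-th coordinate $p_\chain(t)$ of the earliest-start path~$p$, and then to translate the coordinate-wise inequalities $\vec p(t)-\vec\lag\le\vec q(t)\le\vec p(t)$ into statements about start times. The key observation is that $q_\chain(t)$ equals the total processing time that chain~$\chain$ has accumulated by time~$t$ under the schedule $(s_j)_{j\in J}$; in particular, for the $i$-th job $j_{\chain i}$ of chain~$\chain$, we have $q_\chain(t)\ge L_\chain^i$ if and only if $s_{j_{\chain i}}+p_{j_{\chain i}}\le t$, and $q_\chain(t)\ge L_\chain^{i-1}$ together with $q_\chain(t)<L_\chain^i$ captures exactly that $j_{\chain i}$ is running at time~$t$ or that its start time is at most~$t$; analogously $p_\chain(t)\ge L_\chain^i$ iff $\sigma_{j_{\chain i}}+p_{j_{\chain i}}\le t$. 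The same statements hold with the earliest-start schedule $(\sigma_j)_{j\in J}$ and path~$p$.

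First I would prove the forward direction. Assume the lag is at most~$\lag$, i.e.\ $s_j-\sigma_j\le\lag$ for every job~$j$, and also $s_j\ge\sigma_j$ (which holds because $(\sigma_j)$ starts everything as early as precedence constraints allow, so no feasible schedule can start a job earlier). From $s_j\ge\sigma_j$ for all jobs on a chain we get, comparing accumulated processing times, $q_\chain(t)\le p_\chain(t)$ for every~$\chain$ and~$t$, hence $\vec q(t)\le\vec p(t)$. For the lower bound $\vec q(t)\ge\vec p(t)-\vec\lag$: fix a chain~$\chain$ and a time~$t$, and let $j_{\chain i}$ be the job of chain~$\chain$ that is running, or about to start, at time~$t$ according to~$q$ (so $q_\chain(t)\in[L_\chain^{i-1},L_\chain^i)$). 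Using $s_{j_{\chain i}}\le\sigma_{j_{\chain i}}+\lag$ and the fact that under~$p$ chain~$\chain$ processes its jobs consecutively starting from the respective earliest times, one bounds $p_\chain(t)$ from above by $q_\chain(t)+\lag$: the amount of processing chain~$\chain$ can have done under~$p$ by time~$t$ beyond what it has done under~$q$ is at most the delay~$\lag$ by which $j_{\chain i}$ is shifted. Care is needed here because later jobs on the chain may have smaller individual delays, but since $(\sigma_j)$ runs each chain with no idle time once started is \emph{not} true in general, I would instead argue directly: $p_\chain(t)=\max(0,\min(L_\chain,\ \text{stuff}))$ is a nondecreasing $1$-Lipschitz function of~$t$, and $p_\chain(t)\ge L_\chain^i$ would force $\sigma_{j_{\chain i}}+p_{j_{\chain i}}\le t$, whence $s_{j_{\chain i}}+p_{j_{\chain i}}\le t+\lag$, i.e.\ $q_\chain(t+\lag)\ge L_\chain^i$; combined with $1$-Lipschitzness of $q_\chain$ this yields $q_\chain(t)\ge L_\chain^i-\lag\ge p_\chain(t)-\lag$, and a symmetric argument handles the case $L_\chain^{i-1}\le p_\chain(t)<L_\chain^i$. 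Thus $\vec q(t)\in\Gamma_\lag(t)$ for all~$t$.

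For the converse, suppose $q$ is $\lag$-corridored. From $\vec q(t)\le\vec p(t)$ for all~$t$, applied to each chain~$\chain$ and each job $j_{\chain i}$: whenever $s_{j_{\chain i}}+p_{j_{\chain i}}\le t$ we would like to conclude $\sigma_{j_{\chain i}}+p_{j_{\chain i}}\le t$, but actually the useful direction is that $q_\chain(t)\le p_\chain(t)$ forces $\sigma_{j_{\chain i}}\le s_{j_{\chain i}}$, recovering $s_j\ge\sigma_j$. From $\vec q(t)\ge\vec p(t)-\vec\lag$: evaluating at $t=\sigma_{j_{\chain i}}+p_{j_{\chain i}}$, we have $p_\chain(t)\ge L_\chain^i$, so $q_\chain(t)\ge L_\chain^i-\lag>L_\chain^{i-1}-\lag$; turning this into $s_{j_{\chain i}}\le\sigma_{j_{\chain i}}+\lag$ requires relating the deficit $L_\chain^i-q_\chain(t)\le\lag$ at time $t=\sigma_{j_{\chain i}}+p_{j_{\chain i}}$ to how late $j_{\chain i}$ finishes under~$q$; since $q_\chain$ is $1$-Lipschitz and reaches $L_\chain^i$ at time $s_{j_{\chain i}}+p_{j_{\chain i}}$, we get $s_{j_{\chain i}}+p_{j_{\chain i}}\le t+\lag=\sigma_{j_{\chain i}}+p_{j_{\chain i}}+\lag$, hence $s_{j_{\chain i}}\le\sigma_{j_{\chain i}}+\lag$. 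As this holds for every job on every chain, the lag of $(s_j)_{j\in J}$ is at most~$\lag$.

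The main obstacle I anticipate is making the coordinate-wise translation fully rigorous: the path~$p$ of the earliest-start schedule may itself contain idle time on a chain (a job must wait for a predecessor on another chain), so $p_\chain(t)$ is \emph{not} simply $\min(t,L_\chain)$, and one must carefully track, through \cref{def:ept} and the definition of $\vec p(t)$, that $p_\chain(t)\ge L_\chain^i \iff \sigma_{j_{\chain i}}+p_{j_{\chain i}}\le t$. Once that correspondence between a coordinate of the path and the completion-time pattern of the chain is established — together with the fact that every coordinate function $t\mapsto p_\chain(t)$ and $t\mapsto q_\chain(t)$ is nondecreasing and $1$-Lipschitz, which is immediate from the segment structure with $\vec\delta\in\{0,1\}^\pwid$ — both implications reduce to the one-dimensional Lipschitz arguments sketched above, applied chain by chain.
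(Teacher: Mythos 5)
Your forward direction is essentially the paper's argument: compare the two paths coordinatewise using \(\ept_j\le s_j\le\ept_j+\lag\) for every job. The cleanest way to finish your case analysis there is to observe that every job has been processed under \((s_j)_{j\in J}\) by time~\(t\) at least as much as under \((\ept_j)_{j\in J}\) by time~\(t-\lag\) (because \(s_j\le\ept_j+\lag\) and jobs run contiguously), so \(q_\chain(t)\ge p_\chain(t-\lag)\ge p_\chain(t)-\lag\); this is exactly the paper's one-sentence justification, and your use of 1-Lipschitzness of \(q_\chain\) there is in the correct direction.

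The converse, however, has a genuine gap at its key step. From \(q_\chain(\ept_j+p_j)\ge L_\chain^i-\lag\) you conclude, ``since \(q_\chain\) is 1-Lipschitz and reaches \(L_\chain^i\) at time \(s_j+p_j\)'', that \(s_j+p_j\le\ept_j+p_j+\lag\). Lipschitz continuity only bounds how fast \(q_\chain\) can grow, so it gives \(s_j+p_j\ge(\ept_j+p_j)+\bigl(L_\chain^i-q_\chain(\ept_j+p_j)\bigr)\), a \emph{lower} bound on the completion time; it cannot give an upper bound, since a nondecreasing 1-Lipschitz coordinate may simply stay constant (the chain idles) arbitrarily long after time \(\ept_j+p_j\) without contradicting anything you have derived up to that point. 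What actually closes the argument is non-preemption of the individual job, not Lipschitzness of the path coordinate: if \(p_j>\lag\), then \(q_\chain(\ept_j+p_j)\ge L_\chain^{i-1}+(p_j-\lag)\) shows that \(j\) has already been processed for a positive amount of time by \(\ept_j+p_j\) under \((s_j)_{j\in J}\), and since \(j\) runs without interruption from \(s_j\), this forces \(s_j\le\ept_j+\lag\). The paper uses the same ingredient but anchors at \(t=s_j\): in state \(\vec q(s_j)\) job~\(j\) is unprocessed, so the corridor bound \(\vec p(s_j)-\vec\lag\le\vec q(s_j)\) shows that \(j\) has been processed for at most \(\lag\) time in state \(\vec p(s_j)\), and contiguity of \(j\) under the earliest-start schedule yields \(s_j-\ept_j\le\lag\). (Both your anchor and the paper's tacitly assume \(j\) is not already completed in the comparison state, i.e.\ the case \(p_j\le\lag\) is glossed over; but note that your Lipschitz inference is invalid even when \(p_j>\lag\), where the conclusion itself is true.)
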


\noindent\looseness=-1\cref{lem:corridor} allows us to compute a shortest feasible path from~\(\vec 0\) to~\(\vec L\) using only points in~\(\Gamma_\lag(t)\) for some~\(t\).  Herein, we will exploit the following condition for checking whether a path segment can be part of a \(\lag\)-corridored path.

\begin{lemma}\label{lem:pseudoconvex}\normalfont
  Let \([\vec x,\vec x+t\vec\delta]\)  for \(\vec\delta\in\{0,1\}^\pwid\). %
  If \(\vec x\in\Gamma_\lag(t_0)\) and \(\vec x+t\vec\delta\in\Gamma_\lag(t_0+t)\) for some~\(t_0\geq 0\), then \(\vec x+\tau\vec\delta\in\Gamma_\lag(t_0+\tau)\) for all \(0\leq\tau\leq t\).
\end{lemma}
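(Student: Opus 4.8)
The plan is to prove the statement by a direct coordinatewise analysis, exploiting the fact that the corridor $\Gamma_\lag(t)$ is an \ort{} (an axis-parallel box) whose two defining corners, $\vec p(t)-\vec\lag$ and $\vec p(t)$, move monotonically in~$t$. First I would fix $t_0\ge 0$ and $0\le\tau\le t$ and argue about each coordinate $\chain\in\interval1\pwid$ separately, since both membership in $\Gamma_\lag(\cdot)$ and the segment $[\vec x,\vec x+t\vec\delta]$ decompose coordinatewise. If $\delta_\chain=0$, the $\chain$th coordinate of $\vec x+\tau\vec\delta$ equals $x_\chain$ for every $\tau$, so I only need that the $\chain$th interval $[\vec p(t_0+\tau)_\chain-\lag,\ \vec p(t_0+\tau)_\chain]$ still contains $x_\chain$; this follows because $\vec p(\cdot)_\chain$ is nondecreasing (it records accumulated processing on chain~$\chain$ under the earliest-start schedule), hence $\vec p(t_0)_\chain\le\vec p(t_0+\tau)_\chain\le\vec p(t_0+t)_\chain$, and the hypotheses $x_\chain\ge\vec p(t_0)_\chain-\lag$ (from $\vec x\in\Gamma_\lag(t_0)$) and $x_\chain\le\vec p(t_0+t)_\chain$ (from $\vec x+t\vec\delta\in\Gamma_\lag(t_0+t)$, together with $\delta_\chain=0$) sandwich $x_\chain$ into the required interval.

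Next I would treat the coordinates with $\delta_\chain=1$: here the $\chain$th coordinate of the point in question is $x_\chain+\tau$, which is an increasing function of~$\tau$, and I must show $\vec p(t_0+\tau)_\chain-\lag\le x_\chain+\tau\le\vec p(t_0+\tau)_\chain$. The upper bound uses that $\vec p(\cdot)_\chain$ increases at rate at most~$1$ along any path corresponding to a precedence-respecting schedule (each unit of time advances a chain by at most one unit of processing), so $\vec p(t_0+\tau)_\chain\ge\vec p(t_0)_\chain+\tau\cdot 0$ is too weak — instead I want $\vec p(t_0+\tau)_\chain\ge\vec p(t_0+t)_\chain-(t-\tau)$, i.e.\ the coordinate cannot have grown by more than $t-\tau$ over the remaining time $t-\tau$; combined with $x_\chain+t\le\vec p(t_0+t)_\chain$ this yields $x_\chain+\tau\le\vec p(t_0+t)_\chain-(t-\tau)\le\vec p(t_0+\tau)_\chain$. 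Symmetrically, the lower bound uses that $\vec p(\cdot)_\chain$ grows by at least nothing but at most $\tau$ over time $\tau$, so $\vec p(t_0+\tau)_\chain\le\vec p(t_0)_\chain+\tau$; with $x_\chain\ge\vec p(t_0)_\chain-\lag$ this gives $x_\chain+\tau\ge\vec p(t_0)_\chain-\lag+\tau\ge\vec p(t_0+\tau)_\chain-\lag$, as required. Collecting both bounds over all coordinates shows $\vec x+\tau\vec\delta\in\Gamma_\lag(t_0+\tau)$, and also $\vec x+\tau\vec\delta\in X$ since it lies between $\vec 0$ and $\vec L$ by the same sandwiching.

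The main obstacle is making precise the two one-sided Lipschitz-type properties of $\vec p(\cdot)_\chain$ — that along a precedence-respecting schedule the accumulated processing on any fixed chain is nondecreasing and advances by at least $0$ and at most $1$ per time unit. This is essentially immediate from the definition of the path associated to a schedule (segments have direction $\vec\delta\in\{0,1\}^\pwid$ and integer length, and the chains of a fixed chain decomposition are processed sequentially), but one should state it carefully as an auxiliary observation, perhaps as a one-line remark that for any $t'\le t''$ one has $0\le\vec p(t'')_\chain-\vec p(t')_\chain\le t''-t'$. Once that monotone-Lipschitz fact is recorded, the rest is the routine coordinatewise sandwich described above; no convexity of the feasible region or of $\Gamma_\lag$ beyond its box structure is needed, which is why the lemma holds for arbitrary $\vec\delta\in\{0,1\}^\pwid$ without any feasibility hypothesis on the segment.
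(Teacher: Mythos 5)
Your overall route is the same as the paper's: a coordinatewise sandwich that uses only two facts about the earliest-start path, namely that each coordinate of \(\vec p(\cdot)\) is nondecreasing and advances by at most one unit per unit of time. (The paper phrases two of the four bounds as short contradiction arguments, but it relies on exactly these facts.) Your treatment of the coordinates with \(\delta_\chain=1\) is correct and matches the paper's computation, and your suggestion to record the monotone-Lipschitz property \(0\le p_\chain(t'')-p_\chain(t')\le t''-t'\) for \(t'\le t''\) (writing \(p_\chain(t)\) for the \(\chain\)th coordinate of \(\vec p(t)\)) is exactly what is needed.

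However, the \(\delta_\chain=0\) case as you wrote it does not go through: you invoke the wrong halves of the two box conditions. The facts you cite are \(x_\chain\ge p_\chain(t_0)-\lag\) and \(x_\chain\le p_\chain(t_0+t)\); combined with \(p_\chain(t_0)\le p_\chain(t_0+\tau)\le p_\chain(t_0+t)\) they only place \(x_\chain\) in the interval \([p_\chain(t_0)-\lag,\,p_\chain(t_0+t)]\), which \emph{contains} the target interval \([p_\chain(t_0+\tau)-\lag,\,p_\chain(t_0+\tau)]\) rather than being contained in it, so no conclusion follows. The sandwich works with the other two halves of the hypotheses: from \(\vec x\in\Gamma_\lag(t_0)\) take \(x_\chain\le p_\chain(t_0)\le p_\chain(t_0+\tau)\) for the upper bound, and from \(\vec x+t\vec\delta\in\Gamma_\lag(t_0+t)\) with \(\delta_\chain=0\) take \(x_\chain\ge p_\chain(t_0+t)-\lag\ge p_\chain(t_0+\tau)-\lag\) for the lower bound; both steps use only monotonicity of \(p_\chain\). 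This is precisely what the paper's contradiction arguments amount to in the \(\delta_\chain=0\) case. With this swap your proof is complete and coincides with the paper's.
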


\begin{proof}
  Let \(p\)~be the path corresponding to schedule~\((\ept_j)_{j\in J}\) as in \cref{def:corridor} and let \(\vec \delta=(\delta_1,\dots,\delta_\pwid)\in\{0,1\}^\pwid\). %
For any \(\tau\in [0,t]\), consider 
\begin{align*}
\vec x^\tau=(x_1^\tau,\dots,x^\tau_\pwid):=\vec x+\tau\vec\delta&&
\text{and}&&
\vec y^\tau=(y_1^\tau,\dots,y_\chain^\tau):=\vec p(t_0+\tau).
\end{align*}
  By the prerequisites of the lemma, we have \(\vec y^0-\vec\lag\leq \vec x^0\leq \vec y^0\) and  \(\vec y^t-\vec\lag\leq \vec x^t\leq \vec y^t\).  We show \(\vec y^\tau-\vec\lag\leq\vec x^\tau\leq\vec y^\tau\)  for any \(\tau\in [0,t]\).

 We start with \(\vec x^\tau\leq\vec y^\tau\).  For the sake of contradiction, assume that there is some chain~\(\chain\) and a~\(\tau\in[0,t]\) such that \(x_\chain^\tau>y_\chain^\tau\).  Then, \(x_\chain^\tau>y_\chain^\tau\geq y_\chain^0\geq x_\chain^0\).  It follows that \(\delta_\chain=1\), which contradicts  \(\vec x^t\leq \vec y^t\) because, then,
\begin{align*}
x_\chain^t=x^0_\chain+t=x^0_\chain+\tau+(t-\tau)=x_\chain^\tau+(t-\tau)>y_\chain^\tau+(t-\tau)\geq y_\chain^{\tau+(t-\tau)}= y_\chain^t.
\end{align*}
\looseness=-1 Now, we show \(\vec y^\tau-\vec\lag\leq\vec x^\tau\).  Consider some chain~\(\chain\).  If \(\delta_\chain=1\), then we have \( y_\chain^\tau-\lag\leq y_\chain^0+\tau-\lag\leq x_\chain^0+\tau=x_\ell^\tau \) and we are fine.
If \(\delta_\chain=0\) and there is %
a \(\tau\in[0,t]\) such that \(y_\chain^\tau-\lag>x_\chain^\tau\), then \( y_\chain^t-\lag\geq y_\chain^\tau-\lag>x_\chain^\tau=x_\chain^t\), contradicting \(\vec y^t-\vec\lag\leq\vec x^t\).\qed
\end{proof}
\noindent
We can now prove the following result by computing recurrence~\eqref{eq:recur} for each of the \((\lag+1)^\pwid\)~feasible points~\(\vec x\in \Gamma_\lag(t)\cap\mathbb Z^\pwid\) for all~\(t\in\interval0L\).
\begin{proposition}\normalfont
  An optimal schedule of lag at most~\(\lag\) for RCPSP if it exists is computable in~\((\lag+1)^\pwid\cdot 2^\pwid\cdot\poly(L)\)~time, where \(L\)~is the sum of all processing times and \(\pwid\)~is the partial order width.
\end{proposition}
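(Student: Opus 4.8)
The plan is to run \citeauthor{Ser00}'s dynamic program (\cref{alg:servakh}) but to confine the search to the narrow corridor identified in \cref{lem:corridor}. By that lemma a feasible schedule has lag at most~\(\lag\) exactly when its path stays inside~\(\Gamma_\lag(t)\) at every time~\(t\), so an optimal schedule of lag at most~\(\lag\) corresponds to a shortest feasible path from~\(\vec 0\) to~\(\vec L\) that is \(\lag\)-corridored. The one new wrinkle compared with \cref{alg:servakh} is that a single lattice point may belong to~\(\Gamma_\lag(t)\) for several values of~\(t\), and whether the next unit step keeps the path in the corridor depends on that value; so, unlike the \(\vec x\)-indexed recurrence~\eqref{eq:recur}, we index the table by the pair~\((t,\vec x)\).

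Concretely, first compute the earliest-start schedule~\((\ept_j)_{j\in J}\) and its path~\(p\) (both in polynomial time by one pass over the precedence DAG), which determines all cross-sections~\(\Gamma_\lag(t)\). Then, for \(t=0,1,\dots,L\) in increasing order and, within each level, for the feasible lattice points \(\vec x\in\Gamma_\lag(t)\cap\mathbb Z^\pwid\), compute a bit \(Q(t,\vec x)\) that is~\(1\) precisely when there is a feasible \(\lag\)-corridored path of length~\(t\) from~\(\vec 0\) to~\(\vec x\): set \(Q(0,\vec 0)=1\), and \(Q(t,\vec x)=1\) iff \(Q(t-1,\vec x-\vec\delta)=1\) for some \(\vec\delta\in\Delta_{\vec x}\) with \(\vec x-\vec\delta\in\Gamma_\lag(t-1)\). \cref{lem:pseudoconvex}, applied to the unit segment~\([\vec x-\vec\delta,\vec x]\) with \(t_0=t-1\), shows that once both of its endpoints lie in the respective corridor cross-sections the whole segment does; hence a path built from such steps is genuinely \(\lag\)-corridored, and, conversely, any \(\lag\)-corridored feasible path is recorded by~\(Q\) when cut at integer times. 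Together with the one-to-one correspondence between feasible schedules and feasible paths from~\(\vec 0\) to~\(\vec L\), the least~\(t^*\) with \(Q(t^*,\vec L)=1\) is the minimum makespan among feasible schedules of lag at most~\(\lag\); standard back-pointers reconstruct such a schedule.

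For the bounds, each \(\Gamma_\lag(t)\) is an axis-parallel box with sides of length~\(\lag\), hence has at most \((\lag+1)^\pwid\) lattice points. It suffices to take \(t\in\interval0L\): deleting from a feasible schedule of lag at most~\(\lag\) every time step during which no job runs keeps it feasible and does not increase its lag, and in the resulting schedule some job runs at every time before the makespan, so its makespan is at most~\(L\); thus if a feasible schedule of lag at most~\(\lag\) exists, then an optimal one has makespan at most~\(L\). For each of the \((L+1)\cdot(\lag+1)^\pwid\) table cells we try \(2^\pwid\) vectors~\(\vec\delta\), and---as in \cref{alg:servakh}---test feasibility of a unit segment from its endpoints and one interior point in polynomial time. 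This yields total time \((\lag+1)^\pwid\cdot2^\pwid\cdot\poly(L)\), and the reconstruction is subsumed in this bound.

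I expect the main obstacle to be the bookkeeping that the corridor forces: because one point may be reachable at several times, the time index must be carried explicitly (a plain shortest-path relaxation over~\(\vec x\) would record the wrong time and could discard a continuation that is corridored only later), and \cref{lem:pseudoconvex} must be invoked to certify that checking the endpoints of each unit step is enough. With these two points settled the rest is a routine restriction of \cref{alg:servakh}, and this proposition is the stepping stone toward \cref{thm:rcpsp}.
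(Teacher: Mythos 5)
Your proposal is correct and follows essentially the same route the paper intends for this proposition (which it only sketches before moving on to \cref{thm:rcpsp}): restrict \cref{alg:servakh} to the lattice points of the corridor cross-sections \(\Gamma_\lag(t)\) for \(t\in\interval0L\), using \cref{lem:corridor} to identify lag-\(\lag\) schedules with \(\lag\)-corridored paths and \cref{lem:pseudoconvex} to certify corridor membership of each unit step from its endpoints. Your explicit \((t,\vec x)\)-indexed reachability table and the idle-time argument bounding the relevant makespan by~\(L\) are exactly the bookkeeping the paper leaves implicit.
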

\noindent\looseness=-1 However, note that this is a fixed-parameter algorithm only for polynomial processing times, which is why we skip the proof and go on towards proving \cref{thm:rcpsp}---a  fixed-parameter algorithm that works for arbitrarily large processing times.  To this end, we prove that all maximal segments of a path corresponding to a schedule with lag at most~\(\lag\) start and end in one of \(2\cdot|J|\)~hypercubes with edge length~\(2\lag+1\).%

\begin{lemma}\label{lem:bigcubes}\normalfont
  Let \(q\)~be the path of a feasible schedule~\((s_j)_{j\in S}\) of lag at most~\(\lag\) and let \(t_2\leq t_1\leq t_2+\lag\).  Then, \(\vec q(t_1)\in  \Gamma_{2\lag}(t_2+\lag)\) (cf.\ \cref{def:corridor}).
\end{lemma}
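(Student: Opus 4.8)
The plan is to derive the lemma from the corridor characterisation in \cref{lem:corridor} together with two elementary monotonicity properties of paths in the orthotope. First, every coordinate of a path built from segments~$[\vec x,\vec x+t\vec\delta]$ with~$\vec\delta\in\{0,1\}^\pwid$ is non-decreasing; in particular, for~$a\leq b$ one has $\vec q(a)\leq\vec q(b)$ and $\vec p(a)\leq\vec p(b)$ (using the convention $\vec p(t)=\vec L$ and $\vec q(t)=\vec L$ for arguments beyond the path length). Second, along such a path each coordinate grows at rate at most one, so over a time interval of length~$\lag$ it increases by at most~$\lag$; applied to the earliest-start path~$p$ this gives $\vec p(t_2+\lag)\leq\vec p(t_2)+\vec\lag$, and this stays valid when $t_2+\lag$ exceeds the length of~$p$ since $\vec p$ has then already reached~$\vec L$.

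First I would invoke \cref{lem:corridor}: since the schedule in question is feasible with lag at most~$\lag$, its path~$q$ is $\lag$-corridored, that is, $\vec p(t)-\vec\lag\leq\vec q(t)\leq\vec p(t)$ for all~$t\geq 0$. For the upper bound defining $\Gamma_{2\lag}(t_2+\lag)$ (cf.\ \cref{def:corridor}), this yields $\vec q(t_1)\leq\vec p(t_1)$, and monotonicity of~$p$ with $t_1\leq t_2+\lag$ gives $\vec p(t_1)\leq\vec p(t_2+\lag)$; hence $\vec q(t_1)\leq\vec p(t_2+\lag)$.

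For the lower bound I would chain three inequalities: monotonicity of~$q$ with $t_2\leq t_1$ gives $\vec q(t_1)\geq\vec q(t_2)$; the corridor inequality gives $\vec q(t_2)\geq\vec p(t_2)-\vec\lag$; and the rate bound above rearranges to $\vec p(t_2)\geq\vec p(t_2+\lag)-\vec\lag$. Composing these gives $\vec q(t_1)\geq\vec p(t_2+\lag)-2\vec\lag$. Since any such path stays inside~$X$, we also have $\vec q(t_1)\in X$, so the two bounds together are exactly the assertion $\vec q(t_1)\in\Gamma_{2\lag}(t_2+\lag)$.

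I do not expect a genuine obstacle; the computation is short once the corridor property is in hand. The only point needing a little care is the boundary behaviour of~$\vec p$ and~$\vec q$ when the time argument exceeds the corresponding path length, which is handled uniformly by the stated conventions, both monotonicity properties continuing to hold in that regime.
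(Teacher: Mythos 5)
Your proposal is correct and follows essentially the same route as the paper: invoke \cref{lem:corridor} to get the \(\lag\)-corridor bounds, then combine monotonicity of the paths with the unit-rate bound to sandwich \(\vec q(t_1)\) between \(\vec p(t_2+\lag)-2\vec\lag\) and \(\vec p(t_2+\lag)\). The only (immaterial) difference is in the lower-bound chain, where you apply the corridor inequality at \(t_2\) and the rate bound to \(p\), while the paper applies the corridor inequality at \(t_2+\lag\) and the rate bound to \(q\); both chains are valid and of the same length.
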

\begin{proof}
  Consider the schedule~\((\sigma_j)_{j\in J}\) that starts each job at the earliest possible time and its path~\(p\).  
Our aim is to show \[\vec p(t_2+\lag)-2\vec\lag\leq\vec q(t_1)\leq\vec p(t_2+\lag),\] where \(\vec\lag=(\lag,\dots,\lag)\in\mathbb N^\pwid\).
By \cref{lem:corridor}, \(q\)~is \(\lag\)-corridored. Thus,
\begin{align*}
  \vec p(t_1)-\vec\lag&\leq \vec q(t_1)\leq\vec p(t_1)&\text{and}&&
  \vec p(t_2+\lag)-\vec\lag&\leq\vec q(t_2+\lag)\leq\vec p(t_2+\lag).
\end{align*}
From this, one easily gets \(\vec q(t_1)\leq\vec p(t_1)\leq\vec p(t_2+\lag)\).  Moreover, one has
\[
\vec p(t_2+\lag)-2\vec\lag\leq \vec q(t_2+\lag)-\vec\lag\leq\vec q(t_2)+\vec\lag-\vec\lag=\vec q(t_2)\leq\vec q(t_1).\quad\qed
\]
\end{proof}

\begin{lemma}\label{lem:boxes}\normalfont
  Let \(q\)~be the path of a feasible schedule~\((s_j)_{j\in S}\) of lag at most~\(\lag\) and let \([\vec x,\vec x+t\vec\delta]\)~be a maximal segment of~\(q\) such that the set~\(J(\vec x+\tau\vec\delta)\) of running jobs (cf.\ \cref{def:state}) is the same for all \(\tau\in(0,t)\). Then,
\[\{\vec x,\vec x+t\vec\delta\}\subseteq\Gamma:=\bigcup_{j\in J}\Gamma_{2\lag}(\ept_j+\lag)\cup\bigcup_{j\in J}\Gamma_{2\lag}(\ept_j+p_j+\lag),\]
where \((\sigma_j)_{j\in J}\)~is the schedule that starts each job at the earliest possible time. %
\end{lemma}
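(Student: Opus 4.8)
The plan is to read off the two endpoints of a maximal segment as states of~\(q\) at instants where a job of the schedule starts or completes, and then to feed each such state into \cref{lem:bigcubes} to land it in one of the \(2|J|\) hypercubes making up~\(\Gamma\). First I would recall that, as a function of time~\(\tau\), the running set \(J(\vec q(\tau))\) (cf.\ \cref{def:state}) is piecewise constant and changes only at times where some job of~\((s_j)\) begins (it enters the set) or finishes (it leaves it). Write the maximal segment as \([\vec x,\vec x+t\vec\delta]=[\vec q(t_0),\vec q(t_0+t)]\), so that \(J(\vec q(\cdot))\) is constant on~\((t_0,t_0+t)\) and, by maximality, differs from this value just before~\(t_0\) (unless \(t_0=0\)) and just after~\(t_0+t\) (unless \(t_0+t\) is the makespan of~\(q\)). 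Hence at~\(t_0\) some job starts or completes: for the first segment this is a start at time~\(t_0=0\), because any job running on~\((0,t)\) must have \(s_j=0\) (a positive start \(s_j\in(0,t)\) would force a change of the running set inside~\((0,t)\)), and such a job exists since \(\vec\delta\neq\vec 0\). Symmetrically, at~\(t_0+t\) some job starts or completes, and at the final endpoint~\(\vec L\) the job attaining the makespan of~\(q\) completes. So it suffices to prove that whenever a job~\(j\) starts or completes at a time~\(t^*\), one has \(\vec q(t^*)\in\Gamma\), and to apply this to \(t^*=t_0\) and \(t^*=t_0+t\).

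For this, I would use that \((s_j)_{j\in S}\) respects precedence constraints and has lag at most~\(\lag\), so \(\ept_j\le s_j\le\ept_j+\lag\) for every~\(j\) (cf.\ \cref{def:ept}). If job~\(j\) \emph{starts} at~\(t^*\), i.e.\ \(s_j=t^*\), then \(\ept_j\le t^*\le\ept_j+\lag\); invoking \cref{lem:bigcubes} with its ``\(t_2\)'' set to~\(\ept_j\) and its ``\(t_1\)'' set to~\(t^*\) gives \(\vec q(t^*)\in\Gamma_{2\lag}(\ept_j+\lag)\), which is one of the hypercubes in the first union defining~\(\Gamma\). If job~\(j\) \emph{completes} at~\(t^*\), i.e.\ \(s_j+p_j=t^*\), then adding \(p_j\) to \(\ept_j\le s_j\le\ept_j+\lag\) gives \(\ept_j+p_j\le t^*\le\ept_j+p_j+\lag\); invoking \cref{lem:bigcubes} with ``\(t_2\)''~\(=\ept_j+p_j\) and ``\(t_1\)''~\(=t^*\) gives \(\vec q(t^*)\in\Gamma_{2\lag}(\ept_j+p_j+\lag)\), one of the hypercubes in the second union. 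Either way \(\vec q(t^*)\in\Gamma\), which finishes the proof.

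The main obstacle is the structural claim of the first paragraph: that every boundary of a maximal segment (including the degenerate cases \(\vec x=\vec 0\) and \(\vec x+t\vec\delta=\vec L\)) is a start or completion time of some job, so that \cref{lem:bigcubes} can be applied at all. Once that is pinned down, the remaining work is the direct substitution into \cref{lem:bigcubes} above; the only routine checks are the two inequality chains \(\ept_j\le t^*\le\ept_j+\lag\) and \(\ept_j+p_j\le t^*\le\ept_j+p_j+\lag\), both immediate from \(\ept_j\le s_j\le\ept_j+\lag\) (the left inequality \(\ept_j\le s_j\) itself following by induction along~\(\preceq\) from the fact that \((s_j)\) respects precedence constraints).
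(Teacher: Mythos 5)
Your proof is correct and follows essentially the same route as the paper's: by maximality, each endpoint of the segment is the state of~\(q\) at a time where some job starts or completes, and then \(\ept_j\leq s_j\leq\ept_j+\lag\) combined with \cref{lem:bigcubes} (with \(t_2=\ept_j\) resp.\ \(t_2=\ept_j+p_j\)) places that state in \(\Gamma_{2\lag}(\ept_j+\lag)\) resp.\ \(\Gamma_{2\lag}(\ept_j+p_j+\lag)\). You merely spell out the two cases and the boundary situations (\(t_0=0\) and the final endpoint) more explicitly than the paper does.
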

\begin{proof}
Let \(t_0\)~be chosen arbitrarily such that \(\vec q(t_0)\in\{\vec x,\vec x+t\vec\delta\}\).  By maximality of the segment, some job~\(j\in J\) is starting or ending at time~\(t_0\), that is, \(t_0=s_j\) or \(t_0=s_j+p_j\).  Then, \(\{\vec x,\vec x+t\vec\delta\}\subseteq\Gamma_{2\lag}(\sigma_j+\lag)\cup\Gamma_{2\lag}(\sigma_j+p_j+\lag)\) follows from \(\ept_j\leq s_j\leq\ept_j+\lag\) and \cref{lem:bigcubes}.\qed
\end{proof}

\noindent We are now ready to show a fixed-parameter algorithm for RCPSP parameterized by length and maximum lag.  That is, we prove \cref{thm:rcpsp}.
\begin{proof}[of \cref{thm:rcpsp}]
  We compute the shortest feasible \(\lag\)-corridored path from the state~\(\vec 0\), were no job has started, to the state~\(\vec L\), where all jobs have been completed (cf.\ \cref{lem:corridor}).  We use dynamic programming similarly to \cref{alg:servakh}.  By \cref{lem:boxes}, it is enough to consider those paths whose segments start and end in~\(\Gamma\). %
  Thus, for each~\(\vec x\in\Gamma\cap\mathbb N^\pwid\) in lexicographically increasing order, we compute the length~\(P(\vec x)\) of a shortest \(\lag\)-corridored path from~\(\vec 0\) to~\(\vec x\) with segments starting and ending in~\(\Gamma\).  To this end, for an~\(\vec x\in\Gamma\cap\mathbb N^\pwid\), let \(\Delta_{\vec x}\) be the set of vectors~\(\vec\delta\in\{0,1\}^\pwid\) such that,
  \begin{enumerate}[(i)]
  \item\label{dx1} there is a smallest integer~\(t_{\vec\delta}\geq 1\) such that \(x-t_{\vec\delta}\cdot\vec\delta\in\Gamma\) and such that
  \item\label{dx2} the segment~\([\vec x-t_{\vec\delta}\cdot\vec\delta,\vec x]\) is feasible.
  \end{enumerate}
  Then, \(P(\vec 0)=0\) and, for feasible~\(x\in\Gamma\cap\mathbb N^\pwid\setminus\{\vec 0\}\), one has
  \begin{align*}
    P(\vec x)={}&\min\{P(\vec x-t_{\vec\delta}\cdot\vec \delta)+t_{\vec\delta}\mid \vec\delta\in\Delta_{\vec x}\text{ and }\vec x\in\Gamma_\lag(P(\vec x-t_{\vec\delta}\cdot\vec \delta)+t_{\vec\delta})\},
  \end{align*}
where \(\min\emptyset=\infty\) and the last condition on \(\vec x\)~uses \cref{lem:pseudoconvex} to ensure that we are indeed computing the length~\(P(\vec x)\) of a \emph{\(\lag\)-corridored} path  (cf.\ \cref{def:corridor}) to~\(\vec x\): by induction, we know that \(P(\vec x-t_{\vec\delta}\cdot\vec\delta)\) is the length of a shortest \emph{\(\lag\)}-corridored path to~\(\vec x-t_{\vec\delta}\cdot\vec\delta\), and thus \(\vec x-t_{\vec\delta}\cdot\vec\delta\in\Gamma_\lag(P(\vec x-t_{\vec\delta}\cdot\vec\delta))\).

  We have to discuss how to check \eqref{dx1} and \eqref{dx2}.  One can check \eqref{dx2} in polynomial time since it is enough to check feasibility at the end points and one interior point of the segment since the set of jobs running at the interior points of \([\vec x-t_{\vec\delta}\cdot\vec\delta,\vec x]\) does not change: otherwise, since jobs are started or finished only at integer times, there is a maximal subsegment \([\vec x,\vec x-t\cdot\vec\delta]\) with \(t\leq t_{\vec\delta}-1\) where the set of running jobs does not change.  Then \(\vec x-t\cdot\vec\delta\in\Gamma\)~by \cref{lem:boxes}, contradicting the minimality of~\(t_{\vec\delta}\).
  
\looseness=-1  Towards \eqref{dx1}, we search for the minimum \(t_{\vec\delta}\geq 1\) such that \(\vec x-t_{\vec\delta}\cdot\vec\delta\in\Gamma\).  Consider the schedule \((\ept_j)_{j\in J}\) that schedules each job at the earliest possible time (cf.\ \cref{def:ept}).  It is computable in polynomial time.  By \cref{lem:boxes}, we search for the minimum~\(t_{\vec\delta}\geq 1\) such that \(\vec x-t_{\vec\delta}\cdot\vec\delta\in\Gamma_{2\lag}(\ept_j+\lag)\) or \(\vec x-t_{\vec\delta}\cdot\vec\delta\in\Gamma_{2\lag}(\ept_j+p_j+\lag)\) for some job~\(j\in J\).  That is, by \cref{def:corridor}, for each job~\(j\), we find the minimum~\(t_j\geq 1\) that solves a 
  system of linear inequalities of the form
\(\vec y-2\vec\lag\leq \vec x-t_j\cdot\vec\delta\leq\vec y\), where \(\vec\delta=(\delta_1,\dots,\delta_\pwid)\in\{0,1\}^\pwid\).  Writing \(\vec y=(y_1,\dots,y_\pwid)\) and \(\vec x=(x_1,\dots,x_\pwid)\), either \(t_j=\max(\{1\}\cup\{x_\chain-y_\chain\mid\delta_\chain=1\})\) is the minimum such~\(t_j\) or there is no solution for job~\(j\).  Note that \(t_j\)~is an integer since \(\vec x\) and \(\vec y\) are integer vectors.  Thus, \(t_{\vec\delta}=\min_{j\in J}t_j\)~is computable in polynomial time.

\looseness=-1 We conclude that we process each~\(\vec x\in\Gamma\cap\mathbb N^\pwid\) in \(2^\pwid\cdot\poly(n)\)~time. %
Moreover, \(\Gamma\)~contains at most \(2\cdot|J|\cdot(2\lag+1)^\pwid\)~integer points since each job~\(j\in J\) contributes at most \((2\lag+1)^\pwid\)~points in \(\Gamma_{2\lag}(\ept_j+\lag)\) and at most \((2\lag+1)^\pwid\)~points in~\(\Gamma_{2\lag}(\ept_j+p_j+\lag)\).  A total running time of \((2\lag+1)^\pwid\cdot 2^\pwid\cdot\poly(n)\) follows.\qed
\end{proof}

\section{Conclusion}

\looseness=-1 Our algorithm for RCPSP shows, in particular, that \pThreePOne{} is fixed-parameter tractable parameterized by the partial order width~\(w\) and allowed lag~\(\lag\).  Since the NP-hardness of this problem is a long-standing open question~\citep[OPEN8]{GJ79}, it would be surprising to show W[1]-hardness of this problem for \emph{any} parameter: this would exclude polynomial-time solvability unless FPT\({}={}\)W[1].  Thus, it makes sense to search for a fixed-parameter algorithm for \pThreePOne{} parameterized by~\(w\), whereas we showed that already \pTwoPOnetwo{} and \pThreeSizeOnetwo{} are W[2]-hard parameterized by~\(w\).

\newpage
\paragraph{Acknowledgments.} 
The authors are thankful to Sergey Sevastyanov for pointing out the work of \citet{Ake56} and \citet{Ser00}.  This research was initiated at the annual research retreat of the algorithms and complexity group of TU Berlin, April 3--9, 2016, Krölpa, %
Germany. 

\bibliographystyle{splncsnat}
\bibliography{p-prec-cmax}

\newpage\appendix
\section{Appendix: Omitted proofs}

\subsection{Proofs for \cref{sec:weakhardness}}\label{apx:weakhardness}

{
\renewcommand{\thetheorem}{\ref{thm:weakhardness}}
\begin{theorem}\normalfont
  \pTwoChains{} is weakly NP-hard even for precedence constraints consisting of three chains.
\end{theorem}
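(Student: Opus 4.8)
The plan is to flesh out the sketch. Given a \partition{} instance $A=\{a_1,\dots,a_t\}$, first reject it as a no-instance whenever $\sum_i a_i$ is odd; otherwise set $b:=(\sum_i a_i)/2\ge 1$ and build the three-chain instance exactly as in the sketch: a chain $J^0=(j^0_1\prec\dots\prec j^0_t)$ with $p_{j^0_i}=a_i$, and chains $J^1,J^2$ each consisting of $t+1$ jobs of length $2b$, with target makespan $T:=(2t+3)b$. The precedence relation is a disjoint union of three total orders, so its width is exactly three, and every number produced is bounded polynomially in $\sum_i a_i$; hence this is a polynomial-time many-one reduction and yields \emph{weak} NP-hardness. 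Since the total processing time is $\sum_i a_i+2(t+1)\cdot 2b=2T$, a schedule has makespan $T$ if and only if both machines are busy throughout $[0,T]$.

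For the direction ``makespan $T$ $\Rightarrow$ yes-instance'' the argument is a short counting argument: in a makespan-$T$ schedule each machine does exactly $T$ units of work, an even multiple of~$b$ of which stems from $J^1\cup J^2$-jobs; writing $k_m$ for the number of such jobs on machine~$m$ we get $k_1+k_2=2(t+1)$, and nonnegativity of the remaining (i.e.\ $J^0$-) load forces $k_m\le t+1$, hence $k_1=k_2=t+1$ and each machine carries exactly $b$ units of $J^0$-jobs. Splitting $\{1,\dots,t\}$ according to which machine runs $j^0_i$ then gives a \partition{} solution.

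For the converse ``yes-instance $\Rightarrow$ makespan $T$'', let $S_1\uplus S_2=\{1,\dots,t\}$ with $\sum_{i\in S_1}a_i=\sum_{i\in S_2}a_i=b$, put all of $J^1$ on machine~1 and all of $J^2$ on machine~2, and schedule greedily: process $j^0_1,\dots,j^0_t$ in chain order, and for $j^0_i$ first append to its designated machine $m(i)$ the fewest consecutive not-yet-used jobs of $J^{m(i)}$ so that this machine's current end time $c$ satisfies $c\ge c_{i-1}$ (with $c_0:=0$), then start $j^0_i$ at~$c$ and put $c_i:=c+a_i$; finally append to each machine the remaining jobs of its chain. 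A one-line induction gives $c_i\le\bigl(\sum_{k\le i}a_k\bigr)+2bi$, so $c_t\le 2b(t+1)<T$; consequently fewer than $t+1$ big jobs are consumed from each chain during the greedy phase, no machine is ever idle (each run of appended big jobs exactly fills a gap of length a multiple of $2b$), all precedence constraints hold (within $J^0$ because $j^0_i$ starts at $c\ge c_{i-1}$, within $J^1$ and $J^2$ because their jobs stay in order on a single machine), and, since each machine ends up carrying its whole chain of $t+1$ jobs plus exactly $b$ units of $J^0$-jobs, it finishes at $2b(t+1)+b=T$.

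The main obstacle is precisely this converse direction: an arbitrary \partition{} solution may interleave $S_1$ and $S_2$ arbitrarily along the chain $J^0$, so the $J^0$-jobs cannot be packed into one contiguous block per machine, and one has to rule out that the greedy interleaving ever ``runs ahead'' of what the big jobs can backfill --- which is exactly what the invariant $c_i\le(\sum_{k\le i}a_k)+2bi$ guarantees. The other ingredients (the counting argument, polynomiality, and the width bound) are routine.\qed
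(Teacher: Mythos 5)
Your proposal is correct: it uses exactly the paper's reduction (three chains, short jobs $a_i$ on $J^0$, $t+1$ long jobs of length $2b$ on each of $J^1,J^2$, target $T=(2t+3)b$), and your counting argument for the direction ``makespan $T\Rightarrow$ \partition{} yes'' is the same as the paper's, just spelled out slightly more carefully via $k_1+k_2=2(t+1)$ and $k_m\le t+1$. The only genuine divergence is the converse direction. The paper constructs the makespan-$T$ schedule non-adaptively: it puts each chain $J^1,J^2$ on its own machine at earliest times and \emph{inserts} $j^0_i$ between the $i$-th and $(i+1)$-st long job of its designated chain, then observes that all start times shift by at most $b$, so $j^0_i$ starts in $[2bi,2bi+b)$ and the $J^0$-precedences follow from these windows being separated. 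You instead run a greedy ``backfill with long jobs until past $c_{i-1}$, then place $j^0_i$'' scheme and control it with the invariant $c_i\le\bigl(\sum_{k\le i}a_k\bigr)+2bi$, from which both feasibility (enough long jobs remain, at most $t$ are consumed per chain in the greedy phase) and the final makespan $T$ (no idle time, load exactly $T$ per machine) follow. Both arguments work; the paper's explicit insertion is a bit more direct, since the positions and start-time windows are fixed in advance, whereas your greedy needs the invariant plus the availability argument (and the induction has two cases, depending on whether any long job is appended), but in exchange it avoids reasoning about how insertion shifts later jobs. Note only that your parenthetical ``each run of appended big jobs exactly fills a gap'' is slightly misleading—there are no gaps at all, since every job is appended at the machine's current end time, and the appended long jobs may overshoot $c_{i-1}$; this does not affect correctness.
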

}
\begin{proof}
\looseness=-1 We reduce from the weakly NP-hard \partition{} problem~\cite[SP12]{GJ79}:
Given a multiset of positive integers $A=\{a_1,\ldots,a_t\}$, decide whether there is a subset~$A' \subseteq A$ such that
$\sum_{a_i \in A'} a_i = \sum_{a_i \in A \setminus A'}a_i$.
Let $A=\{a_1,\ldots,a_t\}$ be a \partition{} instance. If $b:=\bigl(\sum_{a_i \in A} a_i\bigr)/2$ is not an integer, then we are facing a no-instance.  Otherwise, we construct a \pTwoChains{}~instance as follows.
Create three chains of jobs $J^0:=\{j^0_1\prec\dots\prec j^0_t\}$, $J^1:=\{j^1_1\prec\dots\prec j^1_{t+1}\}$,  and $J^2:=\{j^2_1\prec\dots\prec j^2_{t+1}\}$.
The jobs~$j^0_i$ with~$i\in\interval1t$ have processing time~$a_i$. The jobs~$j^\ell_i$ with \(\ell\in\{1,2\}\) and $i\in\interval1{t+1}$ have processing time~$2b$.  This completes the construction, which can be performed in polynomial time.  We show that the input \partition{} instance is a yes-instance of and only if the created \pTwoChains{}~instance allows for a schedule with makespan~\(T:=(2t+3)b\).

\((\Leftarrow)\) Assume that our constructed \pTwoChains{} instance has a schedule of makespan~\(T\).  Since we have two chains~\(J^1\) and~\(J^2\) each containing $t+1$~\emph{long jobs} with processing time~$2b$, each machine must perform exactly $t+1$~long jobs from~$J^1 \cup J^2$ and may perform additional \emph{short jobs} with processing times at most~$b$ from~$J^0$.  Let $A'$~be the set of elements in~$A$ corresponding to the jobs from~$J^0$ processed by the first machine.  Then, $A\setminus A'$ corresponds to the jobs from~$J^0$ that are performed by the second machine.  Since the makespan is~$T=(2t+3)b$ and the long jobs already need $(2t+2)b$ time units on each machine, it holds that $\sum_{a_i \in A'} a_i = \sum_{a_i \in A \setminus A'}a_i=b$.  Thus, $A'$~is a solution for our \partition{} instance. 

\((\Rightarrow)\) Let~$A'\subseteq A$ with $\sum_{a_i \in A'} a_i = \sum_{a_i \in A \setminus A'}a_i=b$ be a solution for \partition{} and let $J'^0\subseteq J^0$~denote the set of jobs corresponding to the elements in $A'\subseteq A$.  We construct a schedule of makespan~\(T\) as follows.  First, ignoring the jobs in~$J^0$, schedule each jobs in~$J^1\cup J^2$ at the earliest possible time, that is, the starting time of each job equals the sum of processing times of all preceding jobs in its chain.  So far, the maximum completion time is $(2t+2)b$ and each chain is processed by one machine.  Next, we modify this schedule by ``inserting'' the jobs from~$J^0$ in between two already scheduled jobs.  Herein, \emph{inserting} job~$j$ between job~$j',j''\in J^z$ for $z \in \{1,2\}$ means to set the starting time of~$j$ to the starting time of~$j''$ and to increase the starting times of~$j''$ and of all its successors in~$J^z$ by the processing time of~$j$.  We insert all jobs from~$J_0$ according to the precedence constraints on~\(J_0\):  we insert job~$j^0_i$ between jobs~$j^1_i$ and~$j^1_{i+1}$ if $a_i \in A'$ and between job~$j^2_i$ and job~$j^2_{i+1}$ if~$a_i \in A \setminus A'$.  The way we defined the insertion operation ensures that the schedule can still realized by two machines.  Since $\sum_{a_i \in A'} a_i = \sum_{a_i \in A \setminus A'}a_i=b$, it further holds that the starting time of every job was increased by at most~$b$ and, hence, the constructed schedule has makespan at most~\(T=(2t+3)b\) (the latest executed jobs are still job~$j^1_{t+1}$ and job~$j^2_{t+1}$).  It remains to check that the precedence constraints are fulfilled: constraints between jobs from~$J^1$ or from~$J^2$ remain fulfilled since we do not change their relative execution order.  The constraints between jobs~$j^0_{i}$ and~$j^0_{i'}$ for $1\le i < i' \le t$ are fulfilled because, in our constructed schedule, the starting time of job~$j^0_{i}$ is in the interval~$[i\cdot2b,i\cdot2b+b)$, the starting time of job~$j^0_{i'}$ is in the interval~$[i'\cdot2b,i'\cdot2b+b)$, and these interval do not intersect.\qed
\end{proof}

\subsection{Proofs of \cref{sec:stronghardness}}\label{apx:stronghardness}
\begin{lemma}\normalfont
A \kshuffle{} instance~$(t, s_1, \ldots, s_k)$ is a yes-instance if and only if the \pTwoPOnetwo{} instance created by \cref{constr:stronghardness} allows for a schedule of makespan~\(T:=\sum_{x=1}^{|t|}(t[x]+1)\).
\end{lemma}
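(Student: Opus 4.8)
I would read \cref{obs:stronghardness} as the backbone of the argument: in \emph{any} makespan-$T$ schedule the floor jobs occupy fixed positions, and this leaves, for each position $x\in\interval1{|t|}$, exactly one \emph{available time slot} — a time interval of length $t[x]$ (starting at $\tau(x)$) that is free of floor jobs on one of the two machines, while the other machine runs $z_{x,1}$ during that interval, and the two unit-length jobs $z_{x,2},z_{x,3}$ occupy both machines on the unit interval that separates the slot at position $x$ from the slot at position $x+1$. The plan is then to build a dictionary: placing the worker jobs in a makespan-$T$ schedule is the same as distributing the letters of $s_1,\dots,s_k$ over the letters of $t$.

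For ``$\Rightarrow$'', I would take increasing functions $f_i\colon\interval1{|s_i|}\to\interval1{|t|}$ with pairwise disjoint images and $t[f_i(x)]=s_i[x]$ witnessing $t\in s_1\shuffle\dots\shuffle s_k$ (cf.\ \cref{def:shuffprod}), fix the floor jobs as in \cref{obs:stronghardness}, and put the worker job $j_{i,x}$ into the available time slot at position $f_i(x)$ (on the floor-free machine). This slot has length $t[f_i(x)]=s_i[x]$, the length of $j_{i,x}$, so it is filled exactly. Disjointness of the images guarantees at most one worker job per slot, hence at every time at most two jobs run, so the schedule is realizable on two identical machines; the chain precedences $j_{i,x}\prec j_{i,x+1}$ hold because $f_i(x)<f_i(x+1)$ and consecutive slots are separated in time by the floor-occupied unit interval. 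The makespan is $T$.

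For ``$\Leftarrow$'', I would start from a makespan-$T$ schedule, invoke \cref{obs:stronghardness}, and note that since $z_{x,2},z_{x,3}$ block both machines on the unit interval between the slots at positions $x$ and $x+1$, no worker job can straddle two slots; thus every worker job lies inside a single slot, say the one at position $g(j)$. A volume count gives $\sum_{i,x}s_i[x]=\charCount t1+2\charCount t2=\sum_{x=1}^{|t|}t[x]$ (using the instance assumption $\charCount t1=\sum_i\charCount{s_i}1$ and $\charCount t2=\sum_i\charCount{s_i}2$), and since the worker jobs assigned to a slot of length $t[y]$ have total length at most $t[y]$, equality must hold slot by slot — every slot is completely filled. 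Because a length-$2$ worker job fits only into a length-$2$ slot, at most one per slot, and the number of length-$2$ worker jobs equals $\charCount t2$, the number of length-$2$ slots, every length-$2$ slot receives exactly one length-$2$ worker job and nothing else, hence every length-$1$ slot receives exactly one length-$1$ worker job. So $g$ is a bijection from worker jobs to positions of $t$ with $p_j=t[g(j)]$, and setting $f_i(x):=g(j_{i,x})$ yields functions with pairwise disjoint images and $t[f_i(x)]=s_i[x]$; each $f_i$ is strictly increasing because $j_{i,x}\prec j_{i,x+1}$ together with the linear, gapped ordering of the slots in time rules out $f_i(x+1)\le f_i(x)$. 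Therefore $t\in s_1\shuffle\dots\shuffle s_k$.

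The step I expect to be the main obstacle is the combinatorial core of ``$\Leftarrow$'': a purely local observation does not exclude the possibility that a length-$2$ slot is filled by two length-$1$ worker jobs — which would destroy the letter-to-letter correspondence — so one genuinely needs the global volume equality together with the counting of length-$2$ worker jobs against length-$2$ slots to force the slot-filling to be a length-preserving bijection. The non-straddling claim, though easy, is where the exact shape of the floor gadget in \cref{constr:stronghardness} and \cref{obs:stronghardness} is essential.
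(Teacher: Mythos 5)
Your proposal is correct and takes essentially the same route as the paper's proof: the forward direction places each worker job~$j_{i,x}$ into the available time slot at~$\tau(f_i(x))$ exactly as the paper does, and your backward direction's core argument---length-$2$ worker jobs fit only into length-$2$ slots, and their count equals the count of such slots, forcing a length-preserving bijection between worker jobs and slots---is precisely the paper's counting argument, merely wrapped in an explicit (and harmless) volume-equality step.
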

\begin{proof}
($\Rightarrow$) Consider $k$ functions $f_1,\ldots,f_k$ mapping the letters of $s_1,\ldots, s_k$ to the letters of $t$ as required by \cref{def:shuffprod}.
We use the schedule described by \cref{obs:stronghardness} for the floor jobs, and, for $i\in \interval1k$ and $x\in\interval1{|s_i|}$, we schedule the worker job~$j_{i,x}$ to time~$\tau(f_i(x))$.
Note that the precedence constraints of the worker jobs are satisfied since the functions~$f_i$ are strictly increasing and the difference between two consecutive values of~$\tau$ is at least~$2$ (which is the maximal length of a job). Moreover, for each $y\in\interval1{|t|}$ there is exactly one~$i$ such that~$y=f_i(x)$ for some~$x$.  Hence, a worker job~$j_{i,x}$ can use the available time slot at~$\tau(y)$ without any other worker job occupying it. Finally, this worker job~$j_{i,x}$ needs time~$s_i[x]=t[f_i(x)]=t[y]$, which is exactly the length of the available time slot at~\(\tau(y)\).

($\Leftarrow$) Consider a scheduling with makespan~$T$, and let $n_1=\charCount{t}{1}$ and $n_2=\charCount{t}{2}$.  We construct functions $f_1,\ldots,f_k$ mapping the letters of $s_1,\ldots, s_k$ to the letters of $t$ as required by \cref{def:shuffprod}.  From \cref{obs:stronghardness}, we know that only time slots of lengths one and two are available for worker jobs.  %
Hence, each job~$j_{i,x}$ with length~$s_i[x]=2$ is scheduled to a time~$\tau(y)$ for some $y\in\interval1{|t|}$ with~$t[y]=2$.  We put $f_i(x):=y$.  Since the number of worker jobs of length two is $\sum_{i=1}^k\charCount {s_i}2= n_2$, all available time slots of length two are used by worker jobs of length two.  Thus, jobs of length one must use pairwise distinct available time slots of length one.  Thus, each job~$j_{i,x}$ of length~$s_i[x]=1$ is scheduled to a time~$\tau(y)$ for some~$y\in\interval1{|t|}$ with $t[y]=1$. Put $f_i(x):=y$. Clearly, each of the constructed functions~$f_i$ is total on $\interval1{|s_i|}$.  Moreover, no two functions share a value since the execution times of worker jobs do not intersect.  Finally, each function is increasing: due to the precedence constraints, job~$j_{i,x}$ is scheduled before~$j_{i,x+1}$ and thus, $\tau(f_i(x))<\tau(f_i(x+1))$ and $f_i(x)<f_i(x+1)$ since $\tau$ is increasing. Finally, note that $t[f_i(x)]=s_i[x]$ by construction, hence functions~$f_i$ define a valid mapping of the letters of~$s_i$ into~$t$.\qed
\end{proof}

\subsection{Proofs of \cref{sec:realfpt}}\label{apx:realfpt}

{
\renewcommand{\thelemma}{\ref{lem:corridor}}
\begin{lemma}\normalfont
  A feasible schedule~\((s_j)_{j\in J}\) has lag at most~\(\lag\) if and only if its corresponding path~\(q\) is \(\lag\)-corridored.
\end{lemma}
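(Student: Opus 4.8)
The plan is to reduce the lemma to a one-dimensional statement about each chain and then treat the two directions separately. Fix a chain decomposition $J=J_1\uplus\dots\uplus J_\pwid$ as in \cref{sec:pseudofpt}. For a schedule $(s_j)_{j\in J}$ respecting $\preceq$, the $\chain$-th coordinate of the point at time~$t$ on its path is exactly the total processing already spent on the jobs of chain~$\chain$; call it $W_\chain(t)$. Writing $P_\chain$ for this quantity under the earliest-start schedule $(\ept_j)_{j\in J}$ (whose path is the $p$ of \cref{def:corridor}) and $Q_\chain$ for it under $(s_j)_{j\in J}$, unfolding \cref{def:corridor} shows that $q$ is $\lag$-corridored exactly when $P_\chain(t)-\lag\le Q_\chain(t)\le P_\chain(t)$ for all $t\ge 0$ and all chains~$\chain$. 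So the whole lemma becomes: the lag of $(s_j)$ is at most $\lag$ if and only if these two families of inequalities hold.

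First I would dispose of the upper inequality, which is automatic: any schedule respecting $\preceq$ has $s_j\ge\ept_j$ for every job (induction along $\preceq$), and on a single chain the cumulative-work function equals $\sum_k\min\bigl(p_{j_{\chain k}},(t-(\text{start of }j_{\chain k}))^+\bigr)$, a sum of terms nonincreasing in the individual start times; hence starting every job no later only increases it pointwise, giving $Q_\chain(t)\le P_\chain(t)$. So it remains to characterise when $Q_\chain(t)\ge P_\chain(t)-\lag$ for all $t$ and $\chain$.

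For the direction ``lag $\le\lag$ implies corridored'', I would compare $(s_j)$ with the rigidly shifted schedule $s'_j:=\ept_j+\lag$. It respects $\preceq$, its chain-$\chain$ cumulative-work function is $t\mapsto P_\chain(t-\lag)$, and $s_j\le\ept_j+\lag=s'_j$, so by the same monotonicity as above $Q_\chain(t)\ge P_\chain(t-\lag)$; since $P_\chain$ is nondecreasing with slope at most $1$ (at most one job of a chain runs at a time), $P_\chain(t)-P_\chain(t-\lag)\le\lag$, and the corridor inequality follows. For the converse, I would fix a job $j=j_{\chain i}$ and a small $\varepsilon>0$: under the earliest-start schedule the chain-$\chain$ work first reaches level $L_\chain^{i-1}+\varepsilon$ at time $\ept_j+\varepsilon$, while under $(s_j)$ it reaches it at time $s_j+\varepsilon$; evaluating $Q_\chain\ge P_\chain-\lag$ at $t=\ept_j+\lag+\varepsilon$ and using $P_\chain(\ept_j+\lag+\varepsilon)\ge L_\chain^{i-1}+\lag+\varepsilon$ (job $j_{\chain i}$ has run for $\lag+\varepsilon$ units) forces $Q_\chain(\ept_j+\lag+\varepsilon)>L_\chain^{i-1}$, i.e.\ job $j_{\chain i}$ has already started under $(s_j)$; letting $\varepsilon\to 0$ gives $s_j\le\ept_j+\lag$.

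The hard part will be the last step: the bound $P_\chain(\ept_j+\lag+\varepsilon)\ge L_\chain^{i-1}+\lag+\varepsilon$ is immediate only when $\lag$ is smaller than the processing time of~$j$, whereas a short job can, in the earliest-start schedule, be followed by idle time on its own chain (a successor on chain~$\chain$ may be waiting for a predecessor living on another chain), so the corridor inequality cannot simply be read off at a single time. I would handle this remaining case by a careful level-by-level comparison of the piecewise-linear functions $P_\chain$ and $Q_\chain$ (equivalently, of their generalized inverses), tracking how an offset at level $L_\chain^{i-1}$ propagates forward along the chain; this case distinction is where I expect the real work of the proof to lie.
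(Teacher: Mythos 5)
Your forward direction and the upper corridor bound are correct, and they essentially formalize the two sentences the paper devotes to that direction: comparing $(s_j)_{j\in J}$ with the rigidly shifted schedule $s'_j=\ept_j+\lag$ and using that each $P_\chain$ is nondecreasing with slope at most one is a clean proof of the paper's assertion that, at any time, no chain is ahead of the earliest-start schedule and none is more than $\lag$ behind. Your single-time argument for the converse is also the paper's argument (the paper evaluates the corridor condition at $t=s_j$ rather than at $\ept_j+\lag+\varepsilon$), and you correctly observe that it only yields $s_j\le\ept_j+\lag$ when job~$j$ has not already been completed by the earliest-start schedule at the time considered, i.e., essentially when $p_j>\lag$.

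The case you defer, however, is not merely ``where the real work lies'': it is a genuine obstruction, and the planned level-by-level comparison cannot close it, because for the statement as written the backward implication fails exactly there. Concretely, take two incomparable jobs $a,b$ on different chains with $p_a=10$, $p_b=1$ and ample resources, so that $\ept_a=\ept_b=0$, and let $\lag=1$. The feasible schedule $s_a=0$, $s_b=5$ has lag~$5$, yet its path is $1$-corridored: in the coordinate of $b$'s chain, $\vec p(t)$ never exceeds~$1$, so the lower bound $\vec p(t)-\vec\lag\le\vec q(t)$ holds vacuously, while the upper bound $\vec q(t)\le\vec p(t)$ holds because no chain is ever ahead of the earliest-start schedule. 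There is nothing on $b$'s chain after $b$ to which an offset could propagate, and the same situation arises whenever a job with $p_j\le\lag$ is followed by idle time on its own chain under the earliest-start schedule. Note that the paper's own proof of this direction performs exactly the inference of your easy case---from ``$j$ has been processed for at most $\lag$ time in state $\vec p(s_j)$'' it concludes $s_j-\ept_j\le\lag$, which presupposes that $j$ is not already completed in that state---so you have located the weak point correctly; but the remaining case calls for restricting or reformulating the converse (for instance, asserting it only for jobs with $p_j>\lag$, or replacing it by a statement that a corridored path yields a schedule no longer than any feasible schedule of lag at most~$\lag$), not for a propagation argument that completes the proof as stated.
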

}
\begin{proof}
  \looseness=-1 (\(\Rightarrow)\) Consider a point~\(\vec q(t)\) on~\(q\) and the corresponding point~\(p(t)\) on the path~\(p\) corresponding to schedule~\((\ept_j)_{j\in J}\) (cf.\ \cref{def:corridor}).  Then one has \(\vec q(t)\leq\vec p(t)\) since, at time~\(t\), schedule~\((s_j)_{j\in J}\) cannot have processed any chain for more time than schedule~\((\ept_j)_{j\in J}\).  Moreover, one has \(\vec q(t)\geq\vec p(t)-\vec\lag\) since, at time~\(t\), a chain~\(\chain\) that has been processed for \(x_\chain\)~time by schedule~\((\ept_j)_{j\in J}\) has been processed for at least~\(x_\chain-\lag\) time by schedule~\((s_j)_{j\in J}\).  Thus, \(\vec q(t)\in \Gamma_\lag(t)\).

\looseness=-1 (\(\Leftarrow)\)  We show that \(s_j-\ept_j\leq\lag\) for an arbitrary job~\(j\).  Since \(\vec q(s_j)\in\Gamma_\lag(s_j)\), one has \(\vec p(s_j)-\vec\lag\leq \vec q(s_j)\leq\vec p(s_j)\).  In particular \(\vec q(s_j)\leq\vec p(s_j)+\vec\lag\).  In state~\(\vec q(s_j)\), job~\(j\) has not been processed for any time yet.  It follows that it has been processed for at most \(\lag\)~time in state~\(\vec p(s_j)\).  Since this is the state of schedule~\((\ept_j)_{j\in J}\) at time~\(s_j\), we get \(\ept_j\geq s_j-\lag\), that is, \(s_j-\ept_j\leq\lag\).\qed
\end{proof}

\end{document}